\newtheorem{theorem}{Theorem}[section]
\newtheorem{lemma}[theorem]{Lemma}
\newtheorem{corollary}[theorem]{Corollary}
\newtheorem{conjecture}[theorem]{Conjecture}
\newtheorem{remark}[theorem]{Remark}
 \theoremstyle{definition}
\newtheorem{definition}[theorem]{Definition}
\numberwithin{equation}{section}
\def\d{\delta}
\def\xb{{\mathbf{x}}}
\def\eb{{\mathbf{e}}}
\def\yb{{\mathbf{y}}}
\def\br{\mathbb{R}}
\begin{document}

\title[Genetic Volterra algebras]{genetic Volterra algebras and their derivations}

 %   Information for first author
\author{Rasul Ganikhodzhaev}
\address{Rasul Ganikhodzhaev, Faculty of Mechanics and Mathematics, National University of Uzbekistan,
Vuzgorodok, 100174, Tashkent, Uzbekistan.}

%    Information for second author
\author{Farrukh Mukhamedov}
\address{Farrukh Mukhamedov, Department of Mathematical Sciences, College of Science, The United Arab Emirates University, P.O. Box 15551, Al Ain, Abu Dhabi, UAE}
\email{far75m@gmail.com, farrukh.m@uaeu.ac.ae}

\author{Abror Pirnapasov}
\address{Abror Pirnapasov, Department of Mathematics, The Abdus Salam International Centre for Theoretical Sciences, Trieste, Italy.}
\email{pirnapasov\_imc@mail.ru}

\author{Izzat Qaralleh}
\address{Izzat Qaralleh, Department of Mathematics, Faculty of
Science, Tafila Technical University, Tafila, Jordan}
\email{izzat\_math@yahoo.com}

%    General info
\subjclass[2010]{17D92,17D99, 60J10, 28D05}

%\dedicatory{This paper is dedicated to our advisors.}

\keywords{generic algebra; associative; derivation; Volterra
algebra;}

\begin{abstract}
The present paper is devoted to genetic Volterra algebras. We first
study characters of such algebras. We fully describe associative
genetic Volterra algebras, in this case all derivations are trivial.
In general setting, i.e. when the algebra is not associative, we
provide a sufficient condition to get trivial derivation on generic
Volterra algebras. Furthermore, we describe all derivations of three
dimensional generic Volterra algebras, which allowed us to prove
that any local derivation is a derivation of the algebra.
\end{abstract}

\maketitle

\section{Introduction}

There exist several classes of non-associative algebras (baric,
evolution, Bernstein, train, stochastic, etc.), whose investigation
has provided a number of significant contributions to theoretical
population genetics \cite{R,WB}. Such classes have been defined
different times by several authors, and all algebras belonging to
these classes are generally called \textit{genetic}. In \cite{Eth}
it was introduced the formal language of abstract algebra to the
study of the genetics. Note that problems of population genetics can
be traced back to Bernstein's work \cite{B0,B} where evolution
operators were studied. Such kind of operators are mostly described
by quadratic stochastic operators (see also \cite{Ly2}).

A quadratic stochastic operator is used to present the time
evolution of species in biology \cite{L,V1}, which arises as
follows. Consider a population consisting of $m$ species (or traits)
which we denote by $I=\{1,2,\cdots,m\}$. Let
$x^{(0)}=\left(x_1^{(0)},\cdots,x_m^{(0)}\right)$ be a probability
distribution of species at an initial state and $p_{ij,k}$ be a
probability that individuals in the $i^{th}$ and $j^{th}$ species
(traits) interbreed to produce an individual from $k^{th}$ species
(trait). Then a probability distribution
$x^{(1)}=\left(x_{1}^{(1)},\cdots,x_{m}^{(1)}\right)$ of the spices
(traits) in the first generation can be found as a total
probability,  i.e.,
\begin{equation*} x_k^{(1)}=\sum_{i,j=1}^m p_{ij,k} x_i^{(0)} x_j^{(0)}, \quad k\in\{1,\dots,m\}.
\end{equation*}
This means that the correspondence $x^{(0)} \to x^{(1)}$ defines a
mapping $V$ called \textit{the evolution operator}. This mapping,
for a free population, is a quadratic mapping of the simplex of all
probability distribution on $I$. Therefore, such an operator is also
called \textit{quadratic stochastic operator} (QSO). In other words,
a QSO describes a distribution of the next generation if the
distribution of the current generation was given. The fascinating
applications of QSO to population genetics were given in
\cite{HHJ,Ly2}.
%
%Furthermore, the quadratic stochastic operator was considered an
%important source of analysis for the study of dynamical properties
%and modelings in various fields such as biology
%\cite{HS,May,MO,NSE}, physics \cite{PL,T}, economics and mathematics
%\cite{G,Ly2,T,U,V}.
In \cite{11,MG2015}, it was given along
self-contained exposition of the recent achievements and open
problems in the theory of the QSO.

Note that each QSO defines an algebraic structure on the vector
space $\br^m$ containing the simplex (see next section for
definitions). Such an algebra is called \textit{genetic algebra}.
Several works are devoted (see\cite{Ly2,R}) to certain properties of
these algebras. We point out that the algebras that arise in
genetics (via gametic, zygotic, or copular algebras) have very
interesting structures. They are generally commutative but
nonassociative, yet they are not necessarily Lie, Jordan, or
alternative algebras. In addition, many of the algebraic properties
of these structures have genetic significance. For example, a more
modern use of the genetic algebra theory to self fertilization can
be found in \cite{H1}. Therefore, it is the interplay between the
purely mathematical structure and the corresponding genetic
properties that makes this subject so fascinating. We refer to
\cite{WB} for the comprehensive reference.

In population genetics, it is important to study dynamics of
so-called Volterra operators \cite{NSE}. The dynamics of such kind
of operators have been investigated in \cite{G}. However, genetic
algebras associated to these operators were not completely studied
yet. Therefore, in the present paper, we are going systematically
investigate these kind of algebras (see Sections 3 and 4). On the
other hand, there has recently been much work on the subject of
derivations of genetic algebras (for example \cite{Go,MC,MQ2014}).
Certain interpretations of the derivations have been discussed in
\cite{H2}. Moreover, in these investigations, derivations of genetic
Volterra algebras were not studied. In this paper, we fully describe
associative genetic Volterra algebras, in the later case, all
derivations are trivial. In section 5, we consider a general
setting, i.e. the algebra is not necessarily associative. In this
case, we provide a sufficient condition to get a trivial derivation
on generic Volterra algebra. Furthermore, we describe all
derivations of three dimensional generic Volterra algebra, which
allowed us to prove that any local derivation is a derivation of the
algebra.

\section{Preliminaries}

This section is devoted to some necessary notations.

Let $ I = \{ 1, \dots, m \} $. By $\{\eb_i\}_{i\in I}$ we denote the
standard basis in $\br^m$, i.e. $\eb_i=(\d_{i1},\dots,\d_{im})$,
where $\d$ is the Kronecker's Delta.  Throughout this paper, we
consider the simplex:
\begin{eqnarray}\label{1.2}
S^{m-1} = \left\{\textbf{x}=(x_i)\in\br^m\ : \ x_i\geq0, \ \forall
i\in I, \quad \sum\limits_{i=1}^{m}x_i = 1\right\} \label{eqn1.1}.
\end{eqnarray}

A \textit{quadratic stochastic operator (QSO)} is a mapping of the
simplex $S^{m-1}$ into itself of the form
\begin{equation}\label{1.3}
V:x_k^{\prime}=\sum_{i,j=1}^mp_{ij,k}x_ix_j, \ \ k=1,2,\ldots,m
\end{equation}
where $P_{ij,k}$ are heredity coefficients, which satisfy the
following conditions:
\begin{equation}\label{1.4}
p_{ij,k}\geq0, \ \ p_{ij,k}=p_{ji,k}, \ \ \sum_{k=1}^{m}p_{ij,k}=1,
\ \ i,j,k\in\{1,2,\ldots,m\}
\end{equation}

A QSO $V$ defined by \eqref{1.3} is called \textit{Volterra
operator} \cite{G} if one has
\begin{equation}\label{1.5}
p_{ij,k}=0 \ \ \mbox{if} \ \ k\not\in \{i,j\}, \ \ \textrm{for all}\
i,j,k\in I.
\end{equation}

From \eqref{1.4} and \eqref{1.5} we infer that
\begin{equation}\label{1.51}
p_{ii,i}=1 \ \ \mbox{and} \ \ p_{ij,i}+p_{ij,j}=1, \ \ \textrm{for
all} \ i,j\in I, \ (i\neq j).
\end{equation}

\begin{remark} Note that it is obvious that the biological behavior of
condition \eqref{1.5} is that the offspring repeats one of its
parents' genotype (see \cite{G,11}).
\end{remark}

Let $V$ be a QSO and suppose that $\mathbf{x},\mathbf{y}\in
\mathbb{R}^m$ are arbitrary vectors, we introduce a multiplication
rule (see \cite{H}) on $\mathbb{R}^m $ by
%\begin{equation}\label{4.0}
%\mathbf{x}\circ
%\mathbf{y}=\frac{1}{4}\big(V(\mathbf{x}+\mathbf{y})-V(\mathbf{x}-\mathbf{y})\big)
%\end{equation}
%
%This multiplication can be rewritten as follows:
\begin{equation}\label{4.1}
(\mathbf{x}\circ \mathbf{y})_k=\sum^m_{i,j=1}p_{ij,k}x_iy_j
\end{equation}
where $\mathbf{x}=(x_1,\ldots ,x_m),\mathbf{y}=(y_1,\ldots
,y_m)\in \mathbb{R}^m$.

The pair  $(\br^m,\circ)$ is called \textit{genetic algebra}. We
note the this algebra is commutative, i.e.
$\mathbf{x}\circ\mathbf{y}=\mathbf{y}\circ\mathbf{x}$. Certain
algebraic properties of such kind of algebras were investigated in
\cite{H,Ly2,WB}. In general, the genetic algebra is not necessarily
to be associative. In \cite{G2008,NGS} associativity of low
dimensional genetic algebras have been studied. If $V$ is a Volterra
QSO, then the associated genetic algebra is called {\it genetic
Volterra algebra}.

\begin{remark}
Let $A$ be a Volterra algebra generated by heredity coefficients
$\{p_{ij,k}\}$. Then from \eqref{1.51} and \eqref{4.1} we
immediately find
\begin{itemize}
\item[(a)] for every $i,j\in I$ ($i\neq j$) one has
\begin{equation}\label{hh1}
\eb_{i}\circ\eb_{j}=p_{ij,i}\eb_{i}+p_{ij,j}\eb_{j}, \end{equation}

\item[(b)] $\eb_i^2=\eb_i$ for every $i\in I$.
\end{itemize}
\end{remark}

\begin{theorem}\cite{Ly2} Let $A$ be an algebra over $\br$. If it has a genetic realization with respect to
the natural basis ${\eb_{i},... , \eb_{m}}$, then $A$ is a (non
associative) Banach algebra with respect to the norm
$\|\xb\|=\sum\limits_{i=1}^{m}|x_{i}|$  for
$\xb=\sum\limits_{i=1}^{m}x_{i}\eb_{i}\in A$.
\end{theorem}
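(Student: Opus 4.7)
The plan is to verify the two things needed for $(A,\|\cdot\|)$ to be a (non-associative) Banach algebra: completeness of the norm, and submultiplicativity of the multiplication. Since $A$ is finite-dimensional ($\dim A = m$), completeness with respect to any norm is automatic, so the only substantive content is the submultiplicative estimate $\|\xb \circ \yb\| \leq \|\xb\|\,\|\yb\|$.

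To prove submultiplicativity, I would expand an arbitrary product in the natural basis. Writing $\xb = \sum_i x_i \eb_i$ and $\yb = \sum_j y_j \eb_j$, the multiplication rule \eqref{4.1} gives
\begin{equation*}
\xb \circ \yb = \sum_{k=1}^m \left(\sum_{i,j=1}^m p_{ij,k}\, x_i y_j\right)\eb_k.
\end{equation*}
Taking the $\ell^1$-norm and applying the triangle inequality,
\begin{equation*}
\|\xb \circ \yb\| = \sum_{k=1}^m \left|\sum_{i,j=1}^m p_{ij,k}\, x_i y_j\right| \leq \sum_{k=1}^m \sum_{i,j=1}^m p_{ij,k}\, |x_i|\, |y_j|,
\end{equation*}
where the nonnegativity $p_{ij,k}\geq 0$ from \eqref{1.4} has been used to drop the absolute values around the coefficients. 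Next, swap the order of summation and invoke the stochasticity condition $\sum_{k=1}^m p_{ij,k} = 1$:
\begin{equation*}
\sum_{i,j=1}^m |x_i|\,|y_j|\sum_{k=1}^m p_{ij,k} = \sum_{i,j=1}^m |x_i|\,|y_j| = \|\xb\|\,\|\yb\|.
\end{equation*}
This chain yields $\|\xb \circ \yb\| \leq \|\xb\|\,\|\yb\|$, which is exactly the Banach algebra submultiplicativity condition.

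It remains to remark that $\|\cdot\|$ is a genuine norm (positive definiteness, absolute homogeneity, and triangle inequality are standard for the $\ell^1$-norm on coordinates with respect to a basis), and that $(A,\|\cdot\|)$ is complete because it is finite-dimensional. I do not foresee any real obstacle: the proof is essentially a one-line calculation whose only ingredients are the nonnegativity and row-sum normalization \eqref{1.4} of the heredity coefficients, together with the triangle inequality. The genetic realization hypothesis is used precisely in this way, and no further structural property of $A$ (associativity, commutativity, Volterra type, etc.) is needed.
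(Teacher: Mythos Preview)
Your argument is correct and is the standard one: nonnegativity and the row-sum condition in \eqref{1.4} give submultiplicativity of the $\ell^1$-norm, and finite-dimensionality gives completeness. The paper itself does not supply a proof of this theorem---it is quoted from \cite{Ly2}---so there is nothing to compare against beyond noting that your proof is exactly the expected one.
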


Recall that a \textit{derivation} on algebra $(A,\circ)$ is a linear
mapping $D : A \to A$ such that $D(u\circ v)=D(u)\circ v+u\circ
D(v)$ for all $u,v\in A$. It is clear that $D\equiv 0$ is also a
derivation, and such derivation is called \textit{trivial} one.

\section{Characters of genetic Volterra algebras}

In this section, we characterize all characters of genetic Volterra
algebras.

Let $A$ be a genetic Volterra algebra. We recall that a {\it
character} of $A$ is a linear functional $h$ from $A$ to $\br$ with
$h(\xb\circ \yb)=h(\xb)h(\yb)$ for all $\xb,\yb\in A$.

\begin{lemma}\label{har} Let $A$ be an $m$-dimensional genetic Volterra algebra. If $h(\xb)=\sum\limits_{i=1}^{m}a_{i}x_{i}$ is a character of $A$,
then $a_{i}\in\{0,1\}$.
\end{lemma}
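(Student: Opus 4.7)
The plan is extremely short and relies entirely on the idempotency relation $\eb_i^2 = \eb_i$ recorded in part (b) of the remark following \eqref{hh1}. Since $h$ is linear, we have $h(\eb_i) = a_i$ for each $i \in I$. The key observation is that a character must respect this idempotency.

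Concretely, I would evaluate the multiplicativity condition $h(\xb\circ\yb) = h(\xb)h(\yb)$ at $\xb = \yb = \eb_i$. On one hand,
\[
h(\eb_i\circ\eb_i) = h(\eb_i^2) = h(\eb_i) = a_i,
\]
using $\eb_i^2 = \eb_i$. On the other hand, multiplicativity of $h$ gives
\[
h(\eb_i\circ\eb_i) = h(\eb_i)\,h(\eb_i) = a_i^2.
\]
Combining these yields $a_i = a_i^2$, hence $a_i(a_i-1)=0$, so $a_i \in \{0,1\}$ for every $i\in I$.

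There is essentially no obstacle here; the only ingredient beyond linearity is the basis idempotency, which is a direct consequence of the Volterra condition $p_{ii,i}=1$ (see \eqref{1.51}). The cross-product relations \eqref{hh1} are not even needed for this particular statement, though one could double-check consistency by plugging $\xb=\eb_i$, $\yb=\eb_j$ into the character equation; this gives $a_ia_j = p_{ij,i}a_i + p_{ij,j}a_j$, a relation which will presumably be exploited later to determine \emph{which} subsets $\{i : a_i=1\}$ actually arise from characters.
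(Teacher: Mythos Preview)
Your argument is correct and coincides with the paper's own proof: both use $h(\eb_i)=a_i$ together with the idempotency $\eb_i^2=\eb_i$ to obtain $a_i=a_i^2$, hence $a_i\in\{0,1\}$.
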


\begin{proof}
Let, as before, $\{\eb_i\}$ be vertices of the simplex $S^{m-1}$. It
is clear that $h(\eb_{i})=a_{i}$. Moreover, one can see that each
vector $\eb_i$ is an idempotent of the algebra, i.e.
$\eb^{2}_{i}=\eb_{i}$. This implies that
$$a_{i}=h(\eb_{i})=h(\eb_{i}\circ \eb_{i})=h^{2}(\eb_{i})=a^{2}_{i},$$
which means $a_{i}(a_{i}-1)=0$. Hence, $a_{i}\in\{0,1\}$.
\end{proof}

The proved Lemma \ref{har} implies that for every character $h$ of a
genetic Volterra algebra, one can find a subset $E\subset
I(=\{1,2,\dots,m\})$ such that $h=h_E$, where
\begin{equation}\label{hE}
h_E(\xb)=\sum_{i\in E}x_{i}.
\end{equation}

\begin{theorem}\label{hh0} Let $A$ be a genetic Volterra algebra and $E\subset I$.
Then the following conditions are equivalent:
\begin{itemize}
\item[(i)] The functional $h_E$ is a character;
\item[(ii)] For all  $i\in I\setminus E$, $j\in E$ one has
$p_{ij,j}=0$.
\end{itemize}
\end{theorem}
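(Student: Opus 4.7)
The plan is to reduce the character condition to a finite family of equations by bilinearity, then carry out a short case analysis on pairs of basis vectors.

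Since $h_E$ is linear and the product $\circ$ is bilinear, the identity $h_E(\mathbf{x}\circ\mathbf{y})=h_E(\mathbf{x})h_E(\mathbf{y})$ is equivalent to its restriction to the basis, namely $h_E(\mathbf{e}_i\circ\mathbf{e}_j)=h_E(\mathbf{e}_i)h_E(\mathbf{e}_j)$ for every $i,j\in I$. The diagonal case $i=j$ always holds: by part (b) of the remark after \eqref{4.1} we have $\mathbf{e}_i^{2}=\mathbf{e}_i$, and $h_E(\mathbf{e}_i)\in\{0,1\}$, so both sides coincide. Hence the entire content of the statement lies in the off-diagonal pairs.

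For $i\neq j$, formula \eqref{hh1} combined with linearity of $h_E$ gives
\[
h_E(\mathbf{e}_i\circ\mathbf{e}_j)=p_{ij,i}\,h_E(\mathbf{e}_i)+p_{ij,j}\,h_E(\mathbf{e}_j),\qquad h_E(\mathbf{e}_i)h_E(\mathbf{e}_j)=h_E(\mathbf{e}_i)h_E(\mathbf{e}_j).
\]
I would then split into three subcases according to the position of the pair $\{i,j\}$ relative to $E$. If $i,j\in E$, then both $h_E(\mathbf{e}_i)$ and $h_E(\mathbf{e}_j)$ equal $1$, and relation \eqref{1.51} yields $p_{ij,i}+p_{ij,j}=1$, matching the right-hand side. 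If $\{i,j\}\cap E=\emptyset$, both sides vanish automatically. The only informative case is when exactly one index lies in $E$: taking $j\in E$ and $i\in I\setminus E$, the right-hand side is $0$ while the left-hand side reduces to $p_{ij,j}$, so the equation holds if and only if $p_{ij,j}=0$.

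This establishes both directions of the equivalence: (i)$\Rightarrow$(ii) by reading the last subcase forward, and (ii)$\Rightarrow$(i) by verifying each subcase with the assumed condition. The main subtlety, rather than an obstacle, is bookkeeping: one must use the symmetry $p_{ij,k}=p_{ji,k}$ to see that the constraint produced by the case $i\in E$, $j\in I\setminus E$ is the same as that of $i\in I\setminus E$, $j\in E$, so that the clean formulation in (ii) captures all the informative subcases without redundancy.
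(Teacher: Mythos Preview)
Your proof is correct and follows essentially the same approach as the paper: reduce the character condition to pairs of basis vectors via bilinearity, compute $h_E(\eb_i\circ\eb_j)=p_{ij,i}h_E(\eb_i)+p_{ij,j}h_E(\eb_j)$ from \eqref{hh1}, and split into the three cases according to how $\{i,j\}$ meets $E$. Your treatment is slightly more explicit than the paper's in handling the diagonal case and in noting the symmetry $p_{ij,k}=p_{ji,k}$ to avoid redundancy, but the argument is the same.
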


\begin{proof}
(i)$\Rightarrow $(ii). Assume that $h_E$ is a character. Due to
\eqref{hh1} one finds
\begin{eqnarray}\label{hh2}
h_E(\eb_{i}\circ \eb_{j})=h_E(p_{ij,i}\eb_{i}+p_{ij,j}\eb_{j})
=p_{ij,i}h_E(\eb_{i})+p_{ij,j}h_E(\eb_{j}).
\end{eqnarray}

Then for $i\in I\setminus E$, $j\in E$ from \eqref{hE} one finds
$h_E(\eb_{i})=0$ and $h_E(\eb_{j})=1$. Hence, one gets
\begin{eqnarray*}
0=h_E(\eb_{i})h_E(\eb_{j})=h_E(\eb_{i}\circ \eb_{j})=p_{ij,j}
\end{eqnarray*}
which is the required assertion.

(ii)$\Rightarrow$(i). First note that $h$ is character if and only
if one has $h(\eb_{i})h(\eb_{j})=h(\eb_{i}\circ \eb_{i})$ for all
$i,j\in I$.

To prove the required assertion we consider several cases.

{\sc Case 1.} Assume that $i,j\in E$. Then from \eqref{hE} we have
$h_E(\eb_{i})=h_E(\eb_{j})=1$, and hence from \eqref{hh2} one gets
\begin{eqnarray*}
h_E(\eb_{i}\circ
\eb_{j})=p_{ij,i}+p_{ij,j}=1=h_E(\eb_{i})h_E(\eb_{j}).
\end{eqnarray*}

{\sc Case 2.} Assume that $i,j\in I\setminus E$. Then
$h_E(\eb_{i})=h_E(\eb_{j})=0$. From \eqref{hh2} we have
\begin{eqnarray*}
h_E(\eb_{i}\circ \eb_{j})=0=h_E(\eb_{i})h_E(\eb_{j}).
\end{eqnarray*}

{\sc Case 3.} Let $i\in I\setminus E$, $j\in E$. Then due to our
assumption one has $p_{ij,j}=0$. On the other hand, we find
$h_E(\eb_{i})=0$ and $h_E(\eb_{j})=1$. Again from \eqref{hh2} one
gets
\begin{eqnarray*}
h_E(\eb_{i}\circ \eb_{j})=p_{ij,j}=0=h_E(\eb_{i})h_E(\eb_{j}).
\end{eqnarray*}
This completes the proof.
\end{proof}

\section{Associativity of genetic Volterra algebras}

In this section, we find necessary and sufficient conditions for the
associativity of generic Volterra algebras in terms of the heredity
coefficients.

Before formulation our main result, we prove an auxiliary fact.

\begin{lemma}\label{pp0} Let $\{p_{ij,k}\}$ be the heredity coefficient of a Volterra algebra. Then for any $i,j,k$ with $(i-j)(j-k)(k-i)\neq 0$
the equality
\begin{equation}\label{pp1}
p_{jk,j}p_{ij,i}+p_{jk,k}p_{ik,i}=p_{ij,i}p_{ik,i}
\end{equation}
 is equivalent to
\begin{equation}\label{pp2}p_{ij,i}p_{ik,k}+p_{ij,j}p_{jk,k}=p_{jk,k}p_{ik,k}.\end{equation}
\end{lemma}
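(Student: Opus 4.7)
The plan is to convert both equations into polynomial identities in three free parameters and then verify that the two resulting polynomials coincide. The underlying reason this should work cleanly is that the stochasticity condition \eqref{1.51}, which says $p_{ij,i}+p_{ij,j}=1$ for $i\ne j$, reduces the six quantities $p_{ij,i},p_{ij,j},p_{ik,i},p_{ik,k},p_{jk,j},p_{jk,k}$ appearing in \eqref{pp1}--\eqref{pp2} to just three independent scalars.

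Concretely, I would introduce the abbreviations $a=p_{ij,i}$, $b=p_{ik,i}$, $c=p_{jk,j}$, so that by \eqref{1.51}
$$p_{ij,j}=1-a,\qquad p_{ik,k}=1-b,\qquad p_{jk,k}=1-c.$$
Substituting into \eqref{pp1} gives $ca+(1-c)b=ab$, i.e.
$$ab-ac-b+bc=0,$$
while expanding \eqref{pp2} as $a(1-b)+(1-a)(1-c)=(1-c)(1-b)$ and cancelling the constant and the linear-in-$a$ terms leaves exactly the same identity $ab-ac-b+bc=0$. Since both equations reduce to one and the same polynomial relation in $a,b,c$, the equivalence \eqref{pp1}$\Leftrightarrow$\eqref{pp2} follows at once.

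There is no real obstacle in this approach; the work is routine bookkeeping. The only points requiring care are to track which index plays which role in each factor, to use the symmetry $p_{ij,k}=p_{ji,k}$ implicitly when identifying $ac$ with $ca$, and to note that the hypothesis $(i-j)(j-k)(k-i)\neq 0$ is precisely what guarantees that every heredity coefficient appearing in the two identities has distinct first two indices, so that \eqref{1.51} is legitimately applicable to all six of them.
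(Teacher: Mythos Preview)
Your proof is correct and follows essentially the same approach as the paper: both use the Volterra constraint \eqref{1.51} to eliminate three of the six heredity coefficients and then check that \eqref{pp1} and \eqref{pp2} become the same quadratic relation. The only cosmetic difference is that the paper transforms \eqref{pp1} directly into \eqref{pp2}, whereas you reduce each equation separately to the common form $ab-ac-b+bc=0$; the underlying algebra is identical.
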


\begin{proof} For any $i,j,k$ with $(i-j)(j-k)(k-i)\neq 0$ due to Volterra condition, we have
$$p_{jk,j}=1-p_{jk,k},\ \  p_{ij,i}=1-p_{ij,j}, \ \
p_{ik,i}=1-p_{ik,k}.$$ Therefore, from \eqref{pp1} we have
\begin{eqnarray*}
0&=&p_{jk,j}p_{ij,i}+p_{jk,k}p_{ik,i}-p_{ij,i}p_{ik,i}\\[2mm]
&=&(1-p_{jk,k})(1-p_{ij,j})+p_{jk,k}(1-p_{ik,k})-p_{ij,i}(1-p_{ik,k})\\[2mm]
&=&1-p_{jk,k}-p_{ij,j}+p_{jk,k}p_{ij,j}+p_{jk,k}-p_{jk,k}p_{ik,k}-p_{ij,i}+p_{ij,i}p_{ik,k}\\[2mm]
&=&p_{jk,k}p_{ij,j}+p_{ij,i}p_{ik,k}-p_{jk,k}p_{ik,k}
\end{eqnarray*}
which is \eqref{pp2}.  The proof is complete.
\end{proof}

Now we are ready to formulate a main result of this section.

\begin{theorem}\label{aa1}
Let $A$ be a genetic Volterra algebra. Then the following conditions
are equivalent:
\begin{itemize}
\item[(i)]  $A$ is associative;
\item[(ii)] one has
\begin{itemize}
\item[(a)]  $p_{ij,i}\in \{0,1\}$ for any $i,j\in I$;

\item[(b)] $p_{jk,j}p_{ij,i}+p_{jk,k}p_{ik,i}=p_{ij,i}p_{ik,i}$ for all $i,j,k\in I$
with $(i-j)(j-k)(k-i)\neq 0$.
\end{itemize}
\end{itemize}
\end{theorem}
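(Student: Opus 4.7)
The plan is to reduce associativity, via bilinearity and commutativity, to the identity $(\eb_i\circ\eb_j)\circ\eb_k=\eb_i\circ(\eb_j\circ\eb_k)$ on all triples of basis vectors, and then to do a case analysis on the coincidence pattern of $i,j,k$, using the rules $\eb_i^2=\eb_i$ and $\eb_i\circ\eb_j=p_{ij,i}\eb_i+p_{ij,j}\eb_j$ (for $i\neq j$) from \eqref{hh1}. Throughout I will use the Volterra identity $p_{ij,i}+p_{ij,j}=1$ from \eqref{1.51}, and, at the crucial step, Lemma~\ref{pp0}.

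First I would dispose of the easy cases. If $i=j=k$ both sides equal $\eb_i$. The case $i=k\neq j$ is forced by commutativity of the product. This leaves the two essentially distinct subcases with exactly two indices equal, namely $i=j\neq k$ and $j=k\neq i$. Expanding, for instance, $(\eb_i\circ\eb_j)\circ\eb_j=(p_{ij,i}\eb_i+p_{ij,j}\eb_j)\circ\eb_j$ and equating with $\eb_i\circ\eb_j$, the coefficient of $\eb_i$ yields $p_{ij,i}^2=p_{ij,i}$, i.e.\ $p_{ij,i}\in\{0,1\}$, which is exactly condition~(a); the coefficient of $\eb_j$ then follows automatically from $p_{ij,i}+p_{ij,j}=1$. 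The subcase $i=j\neq k$ produces the same condition (with $i,k$ in place of $i,j$), so together the two two-equal cases are equivalent to (a).

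For the case of three pairwise distinct indices, I would expand both sides explicitly:
\begin{align*}
(\eb_i\circ\eb_j)\circ\eb_k &= p_{ij,i}p_{ik,i}\eb_i+p_{ij,j}p_{jk,j}\eb_j+\bigl(p_{ij,i}p_{ik,k}+p_{ij,j}p_{jk,k}\bigr)\eb_k,\\
\eb_i\circ(\eb_j\circ\eb_k) &= \bigl(p_{jk,j}p_{ij,i}+p_{jk,k}p_{ik,i}\bigr)\eb_i+p_{jk,j}p_{ij,j}\eb_j+p_{jk,k}p_{ik,k}\eb_k.
\end{align*}
The $\eb_j$–coefficients coincide trivially. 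The $\eb_i$–coefficients agree precisely when \eqref{pp1} holds, which is condition~(b). The $\eb_k$–coefficients agree precisely when \eqref{pp2} holds. By Lemma~\ref{pp0}, identities \eqref{pp1} and \eqref{pp2} are equivalent under the Volterra relations, so a single one of them suffices. This shows that associativity on all-distinct triples is equivalent to (b).

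Putting the three cases together gives (i)$\Leftrightarrow$(ii). The only nontrivial step is the all-distinct case, and within it the observation that one does \emph{not} need to impose both \eqref{pp1} and \eqref{pp2}; this is exactly where Lemma~\ref{pp0} is used. The main obstacle is therefore the bookkeeping in the all-distinct expansion and correctly recognising which symmetric partner of \eqref{pp1} arises from the $\eb_k$–component, so that Lemma~\ref{pp0} applies cleanly.
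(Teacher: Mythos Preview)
Your proposal is correct and follows essentially the same route as the paper: reduce to basis triples, use the $j=k$ case to extract condition~(a) via $p_{ij,i}^2=p_{ij,i}$, expand both associator sides in the all-distinct case to obtain \eqref{pp1} and \eqref{pp2}, and invoke Lemma~\ref{pp0} to see these are equivalent. The only cosmetic difference is that the paper splits the argument into separate (ii)$\Rightarrow$(i) and (i)$\Rightarrow$(ii) blocks, while you argue the biconditional at each case simultaneously.
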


\begin{proof}  (ii)$\Rightarrow $(i). Assume that (a) and (b) conditions are
satisfied. To prove the associativity, it is enough to establish the
associativity on basis elements $\eb_{1}$, $\eb_{2},\dots ,\eb_{m}$,
i.e.
\begin{equation}\label{aa2}
\eb_{i} \circ (\eb_{j} \circ \eb_{k})=(\eb_{i} \circ \eb_{j}) \circ
\eb_{k}, \ \ \textrm{for all} \ i,j,k\in I.
\end{equation}

Due to the commutativity of the algebra, to show the last equality,
it is sufficient to prove \eqref{aa2} for the cases: $k=j$ and
$(i-j)(j-k)(k-i)\neq 0$, respectively.

The case $k=j$ immediately follows from (a). Therefore, we assume
$(i-j)(j-k)(k-i)\neq 0$. Then using \eqref{hh1} with (b) one finds
\begin{eqnarray*}\eb_{i} \circ (\eb_{j} \circ
\eb_{k})&=&\eb_{i}
\circ(p_{jk,j}\eb_{j}+p_{jk,k}\eb_{k})\nonumber\\[2mm]
&=&p_{jk,j}(p_{ij,i}\eb_{i}+p_{ij,j}\eb_{j})+p_{jk,k}(p_{ik,i}\eb_{i}+p_{ik,k}\eb_{k})\\[2mm]
&=&(p_{jk,j}p_{ij,i}+p_{jk,k}p_{ik,i})\eb_{i}+p_{jk,j}p_{ij,j}\eb_{j}+p_{jk,k}p_{ik,k}\eb_{k}\\[2mm]
&=&p_{ij,i}p_{ik,i}\eb_{i}+p_{jk,j}p_{ij,j}\eb_{j}+(p_{ij,i}p_{ik,k}+p_{ij,j}p_{jk,k})\eb_{k}\\[2mm]
&=&p_{ij,i}(p_{ik,i}\eb_{i}+p_{ik,k}\eb_{k})+p_{ij,j}(p_{jk,j}\eb_{j}+p_{jk,k}\eb_{k})\\[2mm]
&=&(\eb_{i} \circ \eb_{j}) \circ \eb_{k}.
\end{eqnarray*}
This means that $A$ is associative.

(i)$\Rightarrow $(ii). Now we suppose that $A$ is associative, i.e.
\eqref{aa2} holds.

First, we assume that $k=j$. Then due to $\eb^{2}_{j}=\eb_{j}$ we
find
\begin{equation}\label{aa3}
\eb_{i} \circ \eb_{j}=(\eb_{i} \circ \eb_{j}) \circ \eb_{j}.
\end{equation}

Due to \eqref{hh1} one gets
\begin{eqnarray}\label{aa5}
(\eb_{i} \circ \eb_{j}) \circ
\eb_{j}&=&(p_{ij,i}\eb_{i}+p_{ij,j}\eb_{j})\circ \eb_{j}\nonumber\\
&=& p_{ij,i}\eb_{i}\circ \eb_{j}+p_{ij,j}\eb_{j}\circ
\eb_{j}\nonumber\\
&=&p_{ij,i}(p_{ij,i}\eb_{i}+p_{ij,j}\eb_{j})+p_{ij,j}\eb_{j}\nonumber\\
&=&p^2_{ij,i}\eb_{i}+(p_{ij,i}p_{ij,j}+p_{ij,j})\eb_{j}
\end{eqnarray}

Now substituting \eqref{hh1}, \eqref{aa5} into \eqref{aa3}, and
equalizing appropriate coefficients on basis elements, we obtain
\begin{equation*}
p_{ij,i}(1-p_{ij,i})=0
\end{equation*}
which implies $p_{ij,i}\in \{0,1\}$.

Let us assume that $(i-j)(j-k)(k-i)\neq 0$ and consider the equality
\begin{equation}\label{aa6}
\eb_{i} \circ (\eb_{j} \circ \eb_{k})=(\eb_{i} \circ \eb_{j}) \circ
\eb_{k}.
\end{equation}

Keeping in mind \eqref{hh1} from the left side of \eqref{aa6} we get
\begin{eqnarray}\label{aa7}
\eb_{i} \circ (\eb_{j} \circ \eb_{k})&=&\eb_{i}
\circ(p_{jk,j}\eb_{j}+p_{jk,k}\eb_{k})\nonumber\\
&=&p_{jk,j}(p_{ij,i}\eb_{i}+p_{ij,j}\eb_{j})+p_{jk,k}(p_{ik,i}\eb_{i}+p_{ik,k}\eb_{k}).
\end{eqnarray}
Similarly, the right hand side of \eqref{aa6} reduces to
\begin{eqnarray}\label{aa8}
(\eb_{i} \circ \eb_{j}) \circ
\eb_{k}=p_{ij,i}(p_{ik,i}\eb_{i}+p_{ik,k}\eb_{k})+p_{ij,j}(p_{jk,j}\eb_{j}+p_{jk,k}\eb_{k}).
\end{eqnarray}
Now substituting \eqref{aa7},\eqref{aa8} into \eqref{aa6}, and
equalizing appropriate coefficients on basis elements, we obtain the
system of equations
\begin{eqnarray*}
&&p_{jk,j}p_{ij,i}+p_{jk,k}p_{ik,i}=p_{ij,i}p_{ik,i},\\\label{aa10}
&&p_{ij,i}p_{ik,k}+p_{ij,j}p_{jk,k}=p_{jk,k}p_{ik,k}.
\end{eqnarray*}
The last equalities, due to Lemma \ref{pp0}, are equivalent. This
completes the proof.\end{proof}

%
%\begin{example} If one defines the heredity coefficients by
%$$p_{ij,i}=
%\left\{
%\begin{array}{ll}
%1 , \ \ \mbox{if} \   i\geq j \\
%0 , \ \ \mbox{otherwise}, \   \\
%\end{array}
%\right.
%$$
%then it is clear that the associated genetic algebra is Volterra,
%and moreover, it is associative.
%\end{example}

\begin{remark}
We note that associativity conditions for genetic Volterra algebras
have been found in low dimensional setting in \cite{G2008,MH2015}.
\end{remark}

Recall that if $V$ is a Volterra operator on $S^{m-1}$, then it can
be represented as follows (see \cite{G}):
\begin{equation}\label{VV1}
V(\xb)_k=x_k\bigg(1+\sum_{i=1}^ma_{ik}x_i\bigg), \ \ k=1,\dots,m,
\end{equation}
where $|a_{ik}|\leq 1$ and $a_{ik}=-a_{ki}$. Hence, this
representation allows us for each Volterra operator to assign a
skew-symmetric matrix $(a_{ij})$. This correspondence is a
one-to-one linear transformation. One concludes that the set of
Volterra operators is convex, and its extremal elements are
characterized by $|a_{ik}|=1$ ($k\neq i$). Therefore, a Volterra
algebra $A$, associated with $V$, is called \textit{extremal} if it
corresponds to some extremal Volterra operator.

According to the mentioned correspondence, each Volterra operator
defines a skew-symmetric matrix, which defines some graph \cite{G}.
Namely, we suppose that $a_ {ki}\not =0$
  at  $i\not= k.$ Let us consider a full graph
$G_{m}$ consisting of  $m$  vertices:  $1,2,\dots, m$. Define a
tournament $T_{m},$  as  a  graph  consisting  of  $m$ vertices
labeled  by  $1,2,\dots, m$  corresponding  to  a skew-symmetrical
matrix $(a_{ik})$ according to the following rule: there is an arrow
from  $k$  to  $i$  if  $a_{ki}<0,$ and a reverse arrow otherwise.
Hence, to every Volterra algebra, we associate the constructed
tournament. A number of properties of genetic Volterra algebras can
be obtained by means of the theory of tournaments.

Recall that a tournament is said to be \emph{strong} if it is
possible to go from any vertex to any other vertex according to
directions on the edges. A strong subtournament of the tournament
$T_{m}$ ( $m\geq3$) composed of three vertices is called a
\emph{cyclic triple.}

It is interesting (independent of interest) to get associativity
condition of the Volterra algebra in terms of the corresponding
tournament. We have the following

\begin{corollary}\label{AAA1}
Let $A$ be a Volterra algebra. Then $A$ is associative iff it is an
extremal, and the corresponding tournament doesn't have cyclic
triple.
\end{corollary}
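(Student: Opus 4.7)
The plan is to invoke Theorem \ref{aa1} and translate each of its two algebraic conditions directly into tournament-theoretic language.

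First I will establish the dictionary between the heredity coefficients $p_{ij,i}$ and the skew-symmetric entries $a_{ij}$ appearing in the representation \eqref{VV1}. Writing $V(\xb)_k=\sum_{i,j}p_{ij,k}x_ix_j$ and using $\sum_j x_j=1$ to rewrite the linear term $x_k$ in \eqref{VV1}, together with the Volterra support condition $p_{ij,k}=0$ for $k\notin\{i,j\}$, the symmetry $p_{ij,k}=p_{ji,k}$, and matching coefficients of $x_ix_k$, one reads off $p_{ik,k}=(1+a_{ik})/2$, hence $p_{ij,i}=(1-a_{ij})/2$. Condition (a) of Theorem \ref{aa1}, that $p_{ij,i}\in\{0,1\}$ for every $i,j\in I$, is therefore equivalent to $|a_{ij}|=1$ for every $i\neq j$, which is precisely the definition of $A$ being an extremal Volterra algebra.

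Next, assuming (a), I will translate condition (b) into a statement about subtournaments on three vertices. For distinct $i,j,k$ set $\varepsilon_{ij}:=p_{ij,i}\in\{0,1\}$; the Volterra identity $p_{ij,i}+p_{ij,j}=1$ yields $\varepsilon_{ji}=1-\varepsilon_{ij}$, and the sign convention for $T_m$ makes the arrow point from $i$ to $j$ precisely when $\varepsilon_{ij}=1$. Condition (b) rewrites as
\[
\varepsilon_{jk}\varepsilon_{ij}+(1-\varepsilon_{jk})\varepsilon_{ik}=\varepsilon_{ij}\varepsilon_{ik}.
\]
I will then run through the $2^3=8$ possible $\{0,1\}$-values of the triple $(\varepsilon_{ij},\varepsilon_{jk},\varepsilon_{ik})$. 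A direct check shows that in the six \emph{transitive} patterns the identity is automatic, while it fails in exactly the two remaining patterns, and these two patterns correspond, via the arrow convention above, to the two directed $3$-cycles on $\{i,j,k\}$. Consequently, under extremality, condition (b) holds for every $i,j,k$ with $(i-j)(j-k)(k-i)\neq 0$ if and only if no triple of vertices forms a cyclic triple in $T_m$.

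Combining these two equivalences with Theorem \ref{aa1} gives the corollary. The only delicate point is the bookkeeping in the case analysis: one must carefully pair the sign of $a_{ij}$ with the arrow direction in $T_m$ and keep track of the symmetry $\varepsilon_{ji}=1-\varepsilon_{ij}$, so that the two failing patterns are correctly identified as cyclic triples rather than as transitive triples relabelled by a different choice of vertex.
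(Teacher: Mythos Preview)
Your proof is correct. Both your argument and the paper's start from Theorem~\ref{aa1} and identify condition (a) with extremality. The difference lies in how the reverse implication is handled: the paper does \emph{not} re-invoke Theorem~\ref{aa1}(b) for that direction but instead lists the six possible non-cyclic configurations of $\eb_i,\eb_j,\eb_k$ in an extremal algebra and verifies the associator $(\eb_i\circ\eb_j)\circ\eb_k=\eb_i\circ(\eb_j\circ\eb_k)$ by hand in each case. You instead translate condition (b) into the single $\{0,1\}$-identity $\varepsilon_{jk}\varepsilon_{ij}+(1-\varepsilon_{jk})\varepsilon_{ik}=\varepsilon_{ij}\varepsilon_{ik}$ and run through all eight patterns, showing the identity fails exactly on the two directed $3$-cycles; this handles both directions at once via Theorem~\ref{aa1}. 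Your route is more uniform and avoids duplicating the associativity computation already absorbed into Theorem~\ref{aa1}; the paper's route is more self-contained at the cost of reproving a special case. Your dictionary $p_{ij,i}=(1-a_{ij})/2$ and the resulting arrow convention $\varepsilon_{ij}=1\Leftrightarrow a_{ij}<0\Leftrightarrow i\to j$ are also correct and make explicit a link the paper leaves implicit.
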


\begin{proof} Assume that $A$ is associative. Then due to Theorem
\ref{aa1} (a) we immediately find the extremity of $A$.  Now we
suppose that the corresponding tournament has a cyclic triple, i.e.
there are three vertices $i,j,k$ which form a cyclic triple. Without
loss of generality (since $i,j,k$ are cyclic tripe), we may choose
$p_{ij,i}=1$, $p_{jk,j}=1$ and $p_{ki,k}=1$. But this contradicts to
Theorem \ref{aa1} (b), i.e.
$p_{jk,j}p_{ij,i}+p_{jk,k}p_{ik,i}=1\neq0=p_{ij,i}p_{ik,i}$.

Now assume that $A$ is extremal, and the corresponding tournament
doesn't have cyclic triple. Let us establish that $A$ is
associative.

 Due to our assumptions, for any three basis elements $\eb_i,
\eb_j, \eb_k$ we have one of the following possibilities:
\begin{enumerate}
\item[{\tt case 1.}] $\eb_{i}\circ \eb_{j}=\eb_{j}$, $\eb_{k}\circ \eb_{j}=\eb_{j}$
and $\eb_{k}\circ \eb_{i}=\eb_{i}$;

\item[{\tt case 2.}] $\eb_{i}\circ \eb_{j}=\eb_{j}$, $\eb_{k}\circ \eb_{j}=\eb_{j}$
and  $\eb_{k}\circ \eb_{i}=\eb_{k}$;

\item[{\tt case 3.}] $\eb_{i}\circ \eb_{j}=\eb_{i}$, $\eb_{k}\circ \eb_{j}=\eb_{j}$
and $\eb_{k}\circ \eb_{i}=\eb_{i}$;

\item[{\tt case 4.}] $\eb_{i}\circ \eb_{j}=\eb_{i}$, $\eb_{k}\circ \eb_{j}=\eb_{k}$
and $\eb_{k}\circ \eb_{i}=\eb_{i}$;

\item[{\tt case 5.}] $\eb_{i}\circ \eb_{j}=e_{j}$, $\eb_{k}\circ \eb_{j}=\eb_{k}$
and $\eb_{k}\circ \eb_{i}=\eb_{k}$;

\item[{\tt case 6.}] $\eb_{i}\circ \eb_{j}=\eb_{i}$, $e_{k}\circ \eb_{j}=\eb_{k}$
and $\eb_{k}\circ \eb_{i}=\eb_{k}$.
\end{enumerate}

For the case 1, we will show the associativity of the vectors
$\eb_i, \eb_j, \eb_k$ (the other cases can be proceeded by the same
argument). Indeed, we have
\begin{eqnarray*}
&&(\eb_j\circ \eb_k)\circ \eb_i=\eb_j\circ \eb_i=\eb_{j}\circ
(\eb_{k}\circ
\eb_{i}),\\
&&(\eb_{k}\circ \eb_i)\circ \eb_{j}=\eb_j=\eb_k\circ
\eb_j=\eb_k\circ(\eb_i\circ \eb_j),
\end{eqnarray*}
these equalities yield the assertion. The proof is complete.
\end{proof}

\begin{definition}
A tournament is called \textit{transitive} if it does not have any
cyclic triple.
\end{definition}

\begin{remark}\label{AAA2}\cite{BM}
There is only one transitive tournament of a given order $m$ (up to
isomorphism).
\end{remark}

Recall that two tournaments are said to be \textit{isomorphic}, if
there exists a permutation of the vertices which transforms one
tournament into the other.

\begin{theorem}\cite{NGS}\label{AAA3} If the tournaments of two extremal Volterra algebras are isomorphic, then the corresponding algebras are isomorphic as well.
\end{theorem}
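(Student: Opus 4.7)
The plan is to exploit the fact, implicit in the proof of Corollary~\ref{AAA1}, that for an \emph{extremal} Volterra algebra the multiplication table on the basis is entirely determined by the tournament: for every $i\neq j$, the product $\eb_i\circ\eb_j$ is either $\eb_i$ or $\eb_j$, and which of the two it equals is dictated by the orientation of the edge $\{i,j\}$. Once this dictionary is in hand, any tournament isomorphism transports one multiplication table onto the other, and extending linearly should give the required algebra isomorphism.

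Concretely, let $A$ and $A'$ be the two extremal Volterra algebras, with bases $\{\eb_i\}_{i\in I}$ and $\{\eb'_i\}_{i\in I}$, products $\circ$ and $\circ'$, and tournaments $T$ and $T'$. First I would record the dictionary coming from extremality together with \eqref{VV1} and \eqref{hh1}: since $|a_{ij}|=1$ and $p_{ij,j}=(1+a_{ij})/2$, we have $p_{ij,i},p_{ij,j}\in\{0,1\}$ with exactly one of them equal to $1$; in tournament language, the arrow $k\to i$ (i.e.\ $a_{ki}<0$) translates to $\eb_i\circ\eb_k=\eb_k$, so the \emph{source} of the arrow always wins.

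Next, given a tournament isomorphism $\s:I\to I$ from $T$ to $T'$, define a linear map $\Phi:A\to A'$ by $\Phi(\eb_i)=\eb'_{\s(i)}$ and extending linearly. Then $\Phi$ is clearly a linear bijection, and by bilinearity of the product it suffices to verify $\Phi(\eb_i\circ\eb_j)=\Phi(\eb_i)\circ'\Phi(\eb_j)$ for all $i,j\in I$. For $i=j$ this is immediate from $\eb_i^2=\eb_i$ and $(\eb'_{\s(i)})^2=\eb'_{\s(i)}$. For $i\neq j$, the dictionary above gives $\eb_i\circ\eb_j\in\{\eb_i,\eb_j\}$ according to the orientation of $\{i,j\}$ in $T$; because $\s$ preserves edge orientations, the oriented edge between $\s(i)$ and $\s(j)$ in $T'$ has the matching direction, so $\eb'_{\s(i)}\circ'\eb'_{\s(j)}\in\{\eb'_{\s(i)},\eb'_{\s(j)}\}$ is the image under $\Phi$ of $\eb_i\circ\eb_j$.

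The argument is essentially bookkeeping, so there is no serious obstacle; the only point that requires care is the correct translation between the sign of $a_{ij}$, the coefficient $p_{ij,j}$, and the direction of the arrow in $T$, so that ``source wins'' is stated unambiguously before invoking the tournament isomorphism. Once that is fixed, the two-case verification above delivers an algebra isomorphism $\Phi:A\to A'$, proving the theorem.
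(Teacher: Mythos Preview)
The paper does not actually prove Theorem~\ref{AAA3}; it is quoted from \cite{NGS} without argument. So there is no ``paper's own proof'' to compare against. Your proposal is correct and is exactly the expected argument: extremality forces $p_{ij,i}\in\{0,1\}$ for $i\neq j$, so the multiplication table on basis vectors is encoded by the tournament, and any permutation of $I$ carrying one tournament to the other induces, by linear extension, an algebra isomorphism. The translation $p_{ij,j}=(1+a_{ij})/2$ and the ``source wins'' reading of the arrow are stated correctly, and the two-case verification (diagonal via idempotency, off-diagonal via orientation preservation) is all that is needed.
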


\begin{theorem}\label{aa11}
Any associative genetic Volterra algebra is isomorphic to the
algebra with structural coefficients
\begin{equation*}\label{aa11}p_{ij,i}= \left\{
\begin{array}{ll}
1 , \ \ \mbox{if} \   i\geq j \\
0 , \ \ \mbox{otherwise} \   \\
\end{array}
\right.
\end{equation*}
\end{theorem}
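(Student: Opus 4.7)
The plan is to chain together three results already proved in the paper: the structural characterization of associative Volterra algebras (Corollary \ref{AAA1}), the uniqueness of transitive tournaments (Remark \ref{AAA2}), and the tournament-to-algebra transfer principle (Theorem \ref{AAA3}). The proof is essentially a bookkeeping exercise once one identifies the model algebra with the canonical transitive tournament.

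First, I would verify that the algebra $B$ defined by the structure constants in the statement fits the framework of Corollary \ref{AAA1}. Since $p_{ij,i}\in\{0,1\}$ by construction, $B$ is extremal. Unwinding the convention relating the heredity coefficients to the tournament — an arrow goes from $k$ to $i$ exactly when $\eb_k\circ\eb_i=\eb_k$, i.e.\ when $p_{ki,k}=1$ — the tournament of $B$ has an arrow from $i$ to $j$ precisely when $i>j$. This is the linear-order tournament on $\{1,\dots,m\}$, which is manifestly transitive (no cyclic triple). By the converse direction of Corollary \ref{AAA1}, $B$ is itself associative, so at least the model in the statement is a legitimate associative genetic Volterra algebra.

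Next, let $A$ be an arbitrary associative genetic Volterra algebra of dimension $m$. The forward direction of Corollary \ref{AAA1} says that $A$ is extremal and its associated tournament $T_A$ has no cyclic triple, i.e.\ $T_A$ is transitive in the sense of Definition \ref{AAA2}'s preceding definition. Remark \ref{AAA2} states that, up to isomorphism, there is a unique transitive tournament on $m$ vertices, so $T_A$ is isomorphic to the tournament of $B$ described above.

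Finally, Theorem \ref{AAA3} promotes this tournament isomorphism to an algebra isomorphism between the two extremal Volterra algebras $A$ and $B$, which is exactly the required conclusion. I do not expect any step here to be substantive: all the real work has already been done in Corollary \ref{AAA1} and Theorem \ref{AAA3}, and the only thing one must be vigilant about is keeping the sign/arrow convention (the correspondence between $p_{ij,i}\in\{0,1\}$ and the direction of edges in $T_A$) consistent when matching the abstract $A$ with the ordered model $B$.
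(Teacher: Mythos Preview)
Your proof is correct and follows exactly the paper's own argument: verify via Corollary~\ref{AAA1} that the model algebra is extremal with a transitive tournament (hence associative), then for an arbitrary associative Volterra algebra invoke Corollary~\ref{AAA1}, Remark~\ref{AAA2}, and Theorem~\ref{AAA3} in that order. The paper's proof is terser but identical in structure, so there is nothing substantive to add.
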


\begin{proof} It is clear that the corresponding tournament is extremal and transitive, hence due to
Corollary \ref{AAA1} the Volterra algebra is associative. According
to Remark \ref{AAA2} and Theorem \ref{AAA3} we infer the required
assertion.
\end{proof}

\section{Derivations of genetic Volterra algebras}

It is interesting to find all derivations of given algebra. The
well-known Kadison's Theorem states that all derivations of
associative and commutative algebras are trivial. Therefore, under
conditions of Theorem \ref{aa1} any derivation of genetic Volterra
algebra is trivial. In this section, we are going to describe
derivations of genetic Volterra algebras.

Let $A$ be a genetic Volterra algebra generated by the heredity
coefficients $\{p_{ij,k}\}$. We put
$$
I_{i}=\bigg\{j\in I: \ \ p_{ij,i}=\frac{1}{2}\bigg\}.
$$

Any derivation $d$ of $A$ has the following form
\begin{equation}\label{dd1}
D(\eb_{i})=\sum_{j=1}^{m}d_{ij}\eb_{j}, \ \ i\in I, \end{equation}
for some matrix $(d_{ij})$.

\begin{lemma}\label{dd11} Let $A$ be a genetic Volterra algebra, and $d$ be
its derivation given by \eqref{dd1}. If $j\neq i$ and $j\notin
I_{i}$, then $d_{ij}=d_{ji}=0$
\end{lemma}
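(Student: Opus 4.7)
The plan is to exploit the idempotence $\eb_i^2 = \eb_i$ together with the derivation identity, which gives the key relation
\[
D(\eb_i) \;=\; D(\eb_i^2) \;=\; 2\,\eb_i\circ D(\eb_i).
\]
Substituting the ansatz \eqref{dd1} for $D(\eb_i)$ and expanding $\eb_i\circ\eb_k$ via \eqref{hh1} on the right-hand side, I would collect coefficients on the basis $\{\eb_k\}$. For each $k\neq i$, the coefficient of $\eb_k$ in $2\eb_i\circ D(\eb_i)$ is $2 d_{ik} p_{ik,k}$, while on the left it is just $d_{ik}$. Matching produces the scalar equation
\[
d_{ik}\bigl(2p_{ik,k}-1\bigr)=0 \qquad (k\neq i).
\]

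This already delivers the first half: since $p_{ik,i}+p_{ik,k}=1$ by \eqref{1.51}, the condition $p_{ik,k}=\tfrac12$ is equivalent to $p_{ik,i}=\tfrac12$, i.e.\ to $k\in I_i$. Thus for $j\neq i$ with $j\notin I_i$ the equation forces $d_{ij}=0$.

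For the symmetric conclusion $d_{ji}=0$, I would observe that the index sets $I_i$ are ``symmetric'' in the following sense: by $p_{ij,k}=p_{ji,k}$ in \eqref{1.4} and the relation $p_{ji,j}+p_{ji,i}=1$, one has
\[
j\in I_i \;\Longleftrightarrow\; p_{ij,i}=\tfrac12 \;\Longleftrightarrow\; p_{ji,j}=\tfrac12 \;\Longleftrightarrow\; i\in I_j.
\]
Hence $j\notin I_i$ entails $i\notin I_j$. Now I repeat the idempotence argument above with the roles of $i$ and $j$ swapped (applied to $\eb_j^2=\eb_j$), obtaining $d_{ji}(2p_{ji,i}-1)=0$, which together with $i\notin I_j$ yields $d_{ji}=0$.

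There is no real obstacle here: the proof is essentially a two-line consequence of idempotence plus the Volterra relation \eqref{1.51}. The only point requiring care is keeping the symmetry $j\in I_i \Leftrightarrow i\in I_j$ visible so that the single computation (applied once to $\eb_i^2$ and once to $\eb_j^2$) covers both vanishing statements.
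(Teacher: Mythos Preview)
Your proposal is correct and follows essentially the same approach as the paper: both exploit $D(\eb_i)=2\eb_i\circ D(\eb_i)$ from idempotence, expand via \eqref{hh1}, and compare coefficients to obtain the scalar relation forcing $d_{ij}=0$ whenever $p_{ij,j}\neq\tfrac12$; the symmetric conclusion $d_{ji}=0$ is handled (in both arguments) by the observation $j\notin I_i\Leftrightarrow i\notin I_j$. The only cosmetic difference is ordering: the paper states the symmetry first and then does the computation once, whereas you do the computation first and then invoke symmetry.
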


\begin{proof}
Due to our denotation and the definition of the algebra, one can see
that if $j\notin I_{i}$ then $i\notin I_{j}$. Therefore, it is
enough to establish $d_{ij}=0$.

From the definition of the derivation, we immediately find
\begin{equation}\label{dd12}
D(\eb_{i})=2\eb_{i}\circ D(\eb_{i}).
\end{equation}
From \eqref{hh1} and \eqref{dd1} it follows that
\begin{eqnarray}\label{dd13}
2\eb_{i}\circ D(\eb_{i})&=&2\sum_{j=1}^{m}d_{ij}\eb_{i}\circ
\eb_{j}\nonumber\\[2mm]
&=&2\sum_{j=1}^{m}d_{ij}(p_{ij,i}\eb_{i}+p_{ij,j}\eb_{j})\nonumber\\[2mm]
&=&\bigg(2\sum_{j=1}^{m}d_{ij}p_{ij,i}\bigg)\eb_{i}
+2\sum_{j=1}^{m}d_{ij}p_{ij,j}\eb_{j}
\end{eqnarray}
Now inserting \eqref{dd13} and \eqref{dd1} into \eqref{dd12} and
equalizing the corresponding coefficients on basis elements, we
obtain
\begin{eqnarray*}
d_{ij}\bigg(\frac{1}{2}-p_{ij,j}\bigg)=0, \ \ \ \ j\neq i
\end{eqnarray*}
which implies that $d_{ij}=0$ if $j\notin I_i$. This completes the
proof.
\end{proof}

\begin{corollary}\label{dd2} Let $A$ be a genetic Volterra
algebra. If one has
$$p_{ij,j}\neq \frac{1}{2}$$
for all $i,j\in I$, then any derivations of $A$ is trivial.
\end{corollary}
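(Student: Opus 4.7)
The plan is short because the corollary should follow essentially immediately from Lemma \ref{dd11} once we add one observation about the diagonal entries $d_{ii}$.

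First I would translate the hypothesis into the language of the sets $I_i$. For $i\neq j$ the Volterra condition \eqref{1.51} gives $p_{ij,i}+p_{ij,j}=1$, so $p_{ij,i}=\tfrac12$ if and only if $p_{ij,j}=\tfrac12$. The assumption $p_{ij,j}\neq\tfrac12$ for all $i,j\in I$ therefore forces $p_{ij,i}\neq\tfrac12$ as well, so $I_i=\emptyset$ for every $i$. Lemma \ref{dd11} then yields $d_{ij}=0$ whenever $j\neq i$, reducing the derivation matrix to a diagonal one with entries $d_{ii}$.

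Next I need to rule out $d_{ii}\neq 0$. Here I would exploit the idempotency $\eb_i^2=\eb_i$: applying $D$ gives the same identity
\begin{equation*}
D(\eb_i)=2\,\eb_i\circ D(\eb_i)
\end{equation*}
that was used to prove Lemma \ref{dd11}. With $D(\eb_i)=d_{ii}\eb_i$ (from the previous step), the right-hand side becomes $2d_{ii}(\eb_i\circ\eb_i)=2d_{ii}\eb_i$, so comparing coefficients on $\eb_i$ forces $d_{ii}=2d_{ii}$, i.e. $d_{ii}=0$. Combining this with the off-diagonal vanishing shows $D(\eb_i)=0$ for every basis vector, and hence $D\equiv 0$ by linearity.

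There is no real obstacle: the only subtlety is the two-line bookkeeping of recognizing that the corollary's hypothesis is exactly the condition needed to empty every $I_i$, so that Lemma \ref{dd11} can be invoked uniformly, and then that the diagonal entries are pinned down by re-running the $\eb_i^2=\eb_i$ argument (the part of the previous proof whose $\eb_j$-coefficient was read off but whose $\eb_i$-coefficient was not yet exploited).
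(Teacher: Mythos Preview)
Your proof is correct and follows essentially the same route as the paper: invoke Lemma~\ref{dd11} (using that the hypothesis empties every $I_i$) to kill the off-diagonal entries, then use $D(\eb_i)=2\eb_i\circ D(\eb_i)$ with $D(\eb_i)=d_{ii}\eb_i$ to force $d_{ii}=0$. The only difference is that you spell out the equivalence $p_{ij,j}\neq\tfrac12 \Leftrightarrow j\notin I_i$ explicitly, which the paper leaves implicit.
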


\begin{proof}
Due to $p_{ij,j}\neq \frac{1}{2}$ from Lemma \ref{dd11} we obtain
$d_{ij}=0$ if $i\neq j$. This means that every derivation has the
following form $D(\eb_{i})=d_{ii}\eb_{i}$. From the definition of
the derivation one has $D(\eb_{i})=2\eb_{i}\circ D(\eb_{i})$. This,
due to $2\eb_{i}\circ D(\eb_{i})=2d_{ii}(\eb_{i}\circ \eb_{i})=2
d_{ii}\eb_{i}$, yields $d_{ii}=0$. The proof is complete.
\end{proof}

Now we are going to describe all nontrivial derivations of three
dimensional genetic Volterra algebras.

\begin{theorem}\label{dd3}  Let $A$ be a three dimensional genetic Volterra algebra.
The algebra has a nontrivial derivation if and only if there exist
$i,j,k$ with $\{i,j,k\}=\{1,2,3\}$ such that $p_{ij,i}=\frac{1}{2}$,
$p_{ik,i}=p_{jk,j}$.
\end{theorem}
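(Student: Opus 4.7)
The plan is to analyze the matrix $(d_{rs})$ of an arbitrary derivation $D(\eb_r)=\sum_s d_{rs}\eb_s$ by a case split on the set $\mathcal{P}\subset\{\{1,2\},\{1,3\},\{2,3\}\}$ of unordered index pairs with $p_{ij,i}=\tfrac12$ (equivalently $p_{ij,j}=\tfrac12$); call these \emph{$\tfrac12$-pairs}. Corollary~\ref{dd2} disposes of the case $\mathcal{P}=\emptyset$: only the trivial derivation exists, and the stated condition is vacuously false. In the remaining cases, the two main tools are Lemma~\ref{dd11} (which forces $d_{rs}=d_{sr}=0$ whenever $\{r,s\}\notin\mathcal{P}$) and the identity $D(\eb_i)=2\eb_i\circ D(\eb_i)$ coming from $\eb_i^2=\eb_i$, which nails down the diagonal entries.

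For the ($\Leftarrow$) direction, after relabeling we may take $(i,j,k)=(1,2,3)$; set $\alpha:=p_{13,1}=p_{23,2}$, so also $p_{13,3}=p_{23,3}=1-\alpha$. I would propose the explicit nontrivial map $D(\eb_1)=\eb_2-\eb_1$, $D(\eb_2)=D(\eb_3)=0$, and verify the Leibniz rule on each basis pair. The check on $(\eb_1,\eb_2)$ reduces to $p_{12,1}=\tfrac12$; the check on $(\eb_2,\eb_3)$ is automatic since $D(\eb_2)=D(\eb_3)=0$; and the check on $(\eb_1,\eb_3)$ reduces, via \eqref{hh1}, to $\alpha(\eb_2-\eb_1)=\alpha\eb_2+(1-\alpha)\eb_3-\alpha\eb_1-(1-\alpha)\eb_3$, which holds precisely because $p_{13,1}=p_{23,2}$ and $p_{13,3}=p_{23,3}$.

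For the ($\Rightarrow$) direction, assume $D\neq 0$; Corollary~\ref{dd2} forces $\mathcal{P}\neq\emptyset$, so after relabeling $\{1,2\}\in\mathcal{P}$. If $|\mathcal{P}|=3$, the triple $(1,2,3)$ obviously works. If $\mathcal{P}=\{\{1,2\}\}$, Lemma~\ref{dd11} kills $d_{13},d_{31},d_{23},d_{32}$, the self-map identity at each $\eb_i$ yields $d_{33}=0$, $d_{11}=-d_{12}$, and $d_{22}=-d_{21}$, while expanding the Leibniz rule on $(\eb_1,\eb_3)$ and $(\eb_2,\eb_3)$ produces the relations $d_{12}(p_{13,1}-p_{23,2})=0$ and $d_{21}(p_{13,1}-p_{23,2})=0$. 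Since $D\neq 0$ forces $d_{12}\neq 0$ or $d_{21}\neq 0$, we conclude $p_{13,1}=p_{23,2}$, so $(1,2,3)$ witnesses the conclusion.

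The main obstacle is the two-pair subcase $\mathcal{P}=\{\{1,2\},\{1,3\}\}$ (the symmetric subcase $\mathcal{P}=\{\{1,2\},\{2,3\}\}$ is handled identically): here no permutation of $(1,2,3)$ satisfies the hypothesis, so I must rule this case out by proving $D\equiv 0$, contradicting nontriviality. The scheme is: Lemma~\ref{dd11} kills $d_{23},d_{32}$; the self-map identities give $d_{11}=-d_{12}-d_{13}$, $d_{22}=-d_{21}$, $d_{33}=-d_{31}$; the Leibniz rule on $(\eb_1,\eb_2)$ extracts a factor $(p_{23,2}-\tfrac12)$ in front of $d_{13}$, forcing $d_{13}=0$ since $p_{23,2}\neq\tfrac12$; the Leibniz rule on $(\eb_1,\eb_3)$ extracts the same factor in front of $d_{12}$, forcing $d_{12}=0$; and finally the Leibniz rule on $(\eb_2,\eb_3)$ extracts this factor in front of $d_{31}$ (after substituting $d_{33}=-d_{31}$), killing $d_{31}$ and hence the remaining entries $d_{33},d_{21},d_{22}$. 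This bookkeeping is the only nontrivial arithmetic; the rest of the argument is direct substitution.
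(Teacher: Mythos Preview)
Your proposal is correct and follows essentially the same route as the paper: the same case split on how many of the three unordered index pairs are $\tfrac12$-pairs (the paper phrases this via $(|I_1|,|I_2|,|I_3|)\in\{(1,1,0),(2,1,1),(2,2,2)\}$, which is equivalent to your $|\mathcal P|\in\{1,2,3\}$), the same use of Lemma~\ref{dd11} and the idempotent identity to constrain the $d_{ij}$, and the same conclusion that the two-pair case forces $D\equiv 0$. Your $(\Leftarrow)$ direction is slightly slicker --- a single explicit derivation $D(\eb_1)=\eb_2-\eb_1$, $D(\eb_2)=D(\eb_3)=0$ works uniformly for both subcases, whereas the paper writes down two separate families --- and in your final bookkeeping for $|\mathcal P|=2$ note that $d_{21}$ is killed by its own coefficient comparison in the $(\eb_2,\eb_3)$ Leibniz check rather than as a consequence of $d_{31}=0$.
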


\begin{proof} "{\it if}" part. According to Corollary \ref{dd2} we
have that  $I_{j}$ is not empty for some $j\in\{1,2,3\}$. Without
loss of generality we may assume that
$0\neq|I_{1}|\geq|I_2|\geq|I_3|$. Then it is possible that either
$|I_{1}|=1$ or 2. Therefore, we have three main possible cases:
\begin{itemize}
\item[(a)] $|I_{1}|=1$,  $|I_{2}|=1$ and $|I_{3}|=0$;

\item[(b)] $|I_{1}|=2$,  $|I_{2}|=1$ and  $|I_{3}|=1$;

\item[(c)] $|I_{1}|=2$,  $|I_{2}|=2$ and  $|I_{3}|=2$.
\end{itemize}
Note that other cases can be reduced to the listed ones.

In the considered setting, the derivation has a form
\begin{equation}\label{dd31}
D(\eb_{i})=\sum_{j=1}^{3}d_{ij}\eb_{j}.
\end{equation}

Now let us investigate above listed cases one by one.

 {\bf Case
(a)}. Assume that $|I_{1}|=1$, $|I_{2}|=1$ and $|I_{3}|=0$. Then we
have
$$p_{12,1}=p_{12,2}=\frac{1}{2},\ p_{13,1}=1-p_{13,3}\neq\frac{1}{2}$$ and
$p_{23,2}=1-p_{23,3}\neq\frac{1}{2}$. Hence, due to Lemma \ref{dd1}
one finds $d_{13}=d_{31}=d_{23}=d_{32}=0$. Using the same argument
as in the proof of Corollary \ref{dd2} one can show that $d_{33}=0$.
Hence, we obtain $D(\eb_{3})=0$.

In the considered case, we have
$D(\eb_{1})=d_{11}\eb_{1}+d_{12}\eb_{2}$. Keeping in mind the
equality
\begin{eqnarray*}
2\eb_{1}\circ D(\eb_{1})&=&2(d_{11}\eb_{1}+d_{12}\eb_{2})\circ
\eb_{1}\\
&=&2d_{11}\eb_{1}+d_{12}\eb_{1}+d_{12}\eb_{2}
\end{eqnarray*}
from  $D(\eb_{1})=2\eb_{1}\circ D(\eb_{1})$ one finds
$d_{11}=-d_{12}$. Let us denote $a:=d_{11}$.

Using the same argument with $\eb_{2}$, we get $d_{21}=-d_{22}$.
Hence, we denote $b:=d_{21}$.

It is clear that $D(\eb_{2}\circ \eb_{3})=\eb_{3}\circ
D(\eb_{2})+D(\eb_{3})\circ \eb_{2}$. Therefore, due to
\begin{eqnarray*}
D(\eb_{2}\circ
\eb_{3})&=&D(p_{23,2}\eb_{2}+p_{23,3}\eb_{3})\\[2mm]
&=&p_{23,2}D(\eb_{2})+p_{23,3}D(\eb_{3})\\
&=&p_{23,2}D(\eb_{2})\\
&=&bp_{23,2}\eb_{1}-bp_{23,2}\eb_{2}
\end{eqnarray*}
and
\begin{eqnarray*}
\eb_{3}\circ D(\eb_{2})+D(\eb_{3})\circ \eb_{2}&=&\eb_{3}\circ
D(\eb_{2})\\
&=&b(\eb_{3}\circ \eb_{1}-\eb_{3}\circ
\eb_{2})\\
&=&b(p_{13,1}\eb_{1}+p_{13,3}\eb_{3}-p_{23,2}\eb_{2}-p_{23,3}\eb_{3})
\end{eqnarray*}
we obtain $b(p_{13,3}-p_{23,3})=0$.

Now using the same argument with $D(\eb_{1}\circ
\eb_{3})=\eb_{3}\circ D(\eb_{1})+D(\eb_{3})\circ \eb_{1}$ one finds
$a(p_{13,3}-p_{23,3})=0$. Since at least one of $a$ and $b$ is non
zero, hence we have $p_{13,3}=p_{23,3}$. This means that
$1-p_{13,3}=1-p_{23,3}$, so
$p_{13,1}=p_{23,2}$.\\

{\bf Case (b).} Let us suppose that $|I_{1}|=2$, $|I_{2}|=1$ and
$|I_{3}|=1$. Then we have $p_{12,1}=p_{12,2}=\frac{1}{2}$,
$p_{13,1}=p_{13,3}=\frac{1}{2}$ and
$p_{23,2}=1-p_{23,3}\neq\frac{1}{2}$. So, from Lemma \ref{dd1} one
concludes that $d_{23}=d_{32}=0$.

From $D(\eb_{3})=d_{31}\eb_{1}+d_{33}\eb_{3}$ and
\begin{eqnarray*}
2\eb_{3}\circ D(\eb_{3})&=&2(d_{31}\eb_{1}+d_{33}\eb_{3})\circ
\eb_{3}\\
&=&2d_{31}\eb_{1}+d_{33}\eb_{1}+d_{3}\eb_{3}
\end{eqnarray*}
with $D(\eb_{3})=2\eb_{3}\circ D(\eb_{3})$ we obtain
$d_{31}=-d_{33}$. Hence, we denote $a:=d_{31}$.

Using the same argument with $\eb_{2}$, one gets $d_{21}=-d_{22}$,
so one denotes $b:=d_{21}$.

Inserting the equalities
$D(\eb_{1})=d_{11}\eb_{1}+d_{12}\eb_{2}+d_{13}\eb_{3}$ and
\begin{eqnarray*}
2\eb_{1}\circ
D(\eb_{1})&=&2(d_{11}\eb_{1}+d_{12}\eb_{2}+d_{13}\eb_{3})\circ
\eb_{1}\\
&=&(2d_{11}+d_{12}+d_{13})\eb_{1}+d_{12}\eb_{1}+d_{12}\eb_{2}+d_{13}\eb_{3}
\end{eqnarray*}
into $D(\eb_{1})=2\eb_{1}\circ D(\eb_{1})$, and equalizing
corresponding coefficients at $\eb_{1}$ we find
$d_{11}+d_{12}=-d_{13}$. So, we denote $c:=d_{11}$ and $d:=d_{12}$.

It is clear that
\begin{eqnarray}\label{dd32}D(\eb_{2}\circ \eb_{3})=\eb_{3}\circ
D(\eb_{2})+D(\eb_{3})\circ \eb_{2}.
\end{eqnarray}

On the other hand, we have
\begin{eqnarray*}
D(\eb_{2}\circ
\eb_{3})&=&D(p_{23,2}\eb_{2}+p_{23,3}\eb_{3})\\
&=&p_{23,2}D(\eb_{2})+p_{23,3}D(\eb_{3})\\
&=&p_{23,2}(b\eb_{1}-b\eb_{2})+p_{23,3}(a\eb_{1}-a\eb_{3}).
\end{eqnarray*}
Similarly, one finds
\begin{eqnarray*}
\eb_{3}\circ D(\eb_{2})+D(\eb_{3})\circ
\eb_{2}&=&\eb_{3}\circ(b\eb_{1}-b\eb_{2})+\eb_{2}\circ(a\eb_{1}-a\eb_{3})\\
&=&b(\eb_{3}\circ \eb_{1}-\eb_{3}\circ \eb_{2})+a(\eb_{2}\circ
\eb_{1}-\eb_{2}\circ \eb_{3})\\
&=&b\bigg(\frac{\eb_{1}+\eb_{3}}{2}-p_{23,2}\eb_{2}-p_{23,3}\eb_{3}\bigg)\\
&&+a\bigg(\frac{\eb_{1}+\eb_{2}}{2}-p_{23,2}\eb_{2}-p_{23,3}\eb_{3}\bigg)
\end{eqnarray*}

Now inserting the last equalities into \eqref{dd32}, we obtain
$$-bp_{23,2}=a\bigg(\frac{1}{2}-p_{23,2}\bigg)-bp_{23,2},\ \  \Longrightarrow  a\bigg(\frac{1}{2}-p_{23,2}\bigg)=0,$$
$$-ap_{23,3}=b\bigg(\frac{1}{2}-p_{23,2}\bigg)-ap_{23,3},\ \  \Longrightarrow b\bigg(\frac{1}{2}-p_{23,3}\bigg)=0.$$

From $3\notin I_{2}$ one gets $a=b=0$. Hence, we have
\begin{eqnarray*}
&&D(\eb_{1})=c\eb_{1}+d\eb_{2}-(c+d)\eb_{3},\\ &&D(\eb_{2})=0,\\
&&D(\eb_{3})=0.
\end{eqnarray*}

We know that $D(\eb_{1}\circ \eb_{3})=\eb_{3}\circ
D(\eb_{1})+D(\eb_{3})\circ \eb_{1}$. Hence from
\begin{eqnarray*}
D(\eb_{1}\circ
\eb_{3})&=&D\bigg(\frac{1}{2}\eb_{1}+\frac{1}{2}\eb_{3}\bigg)\\
&=&\frac{1}{2}D(\eb_{1})\\
&=&\frac{c}{2}\eb_{1}+\frac{d}{2}\eb_{2}-\frac{c+d}{2}\eb_{3}
\end{eqnarray*}
and
\begin{eqnarray*}
\eb_{3}\circ D(\eb_{1})+D(\eb_{3})\circ \eb_{1}&=&\eb_{3}\circ
D(\eb_{1})\\
&=&c(\eb_{3}\circ \eb_{1})+d(\eb_{3}\circ
\eb_{2})-(c+d)(\eb_{3}\circ
\eb_{3})\\
&=&\frac{c}{2}\eb_{1}+\frac{c}{2}\eb_{3}+dp_{23,2}\eb_{2}+dp_{23,3}\eb_{3}-(c+d)\eb_{3}
\end{eqnarray*}
one finds $d(1/2-p_{23,2})=0$. From $3\notin I_{2}$ it follows that
$d=0$. This implies $D(\eb_{1})=c\eb_{1}-c\eb_{3}$, $D(\eb_{2})=0$
and $D(\eb_{3})=0$.

Similarly, from $D(\eb_{1}\circ \eb_{2})=\eb_{2}\circ
D(\eb_{1})+D(\eb_{2})\circ \eb_{1}$ with
\begin{eqnarray*}
D(\eb_{1}\circ
\eb_{2})&=&D\bigg(\frac{1}{2}\eb_{1}+\frac{1}{2}\eb_{2}\bigg)\\
&=&\frac{1}{2}D(\eb_{1})\\
&=&\frac{c}{2}\eb_{1}-\frac{c}{2}\eb_{3},
\end{eqnarray*}
and
\begin{eqnarray*}
\eb_{2}\circ D(\eb_{1})+D(\eb_{2})\circ \eb_{1}&=&\eb_{2}\circ
D(\eb_{1})\\
&=&c(\eb_{2}\circ \eb_{1})-c(\eb_{2}\circ \eb_{3})\\
&=&\frac{c}{2}\eb_{1}+\frac{c}{2}\eb_{2}-cp_{23,2}\eb_{2}-cp_{23,3}\eb_{3}
\end{eqnarray*}
we obtain  $c(1/2-p_{23,2})=0$. Taking into account $3\notin I_{2}$,
one finds $c=0$. Hence, $D(\eb_{1})=0$, $D(\eb_{2})=0$ and
$D(\eb_{3})=0$. This means that the algebra $A$ has only trivial
derivation, which contradicts to our assumption.

{\bf Case (c)}. In this case, we have $|I_{1}|=2$, $|I_{2}|=2$ and
$|I_{3}|=2$, which yield
$p_{12,1}=p_{12,2}=p_{13,1}=p_{13,3}=p_{23,2}=p_{23,3}=\frac{1}{2}$.
This implies the required assertion.\\

"{\it only if}" part. Assume that $p_{12,1}=\frac{1}{2}$,
$p_{13,1}=p_{23,2}$. We are going to show that the existence of non
trivial derivation of $A$. There are two possibilities:

\begin{enumerate}

\item[(A)] $p_{12,1}=\frac{1}{2}$,
$p_{13,1}=p_{23,2}\neq\frac{1}{2}$;

\item[(B)] $p_{12,1}=p_{13,1}=p_{23,2}=\frac{1}{2}$.
\end{enumerate}

In case (A) we define a linear mapping $D$ as follows:
\begin{equation}\label{dd33}
D(\eb_{1})=a(\eb_{1}-\eb_{2}), \ \ D(\eb_{2})=b(\eb_{1}-\eb_{2}), \
\ D(\eb_{3})=0,
\end{equation}
where $a,b\in\br$. One can check that the defined mapping is a
derivation.

In case (B) we define a linear mapping $D$ as follows:
\begin{equation}\label{dd34}
D(\eb_{1})=c\eb_{1}+d\eb_{2}-(c+d)\eb_{3}, \ \
D(\eb_{2})=b(\eb_{1}-\eb_{2}), \ \ D(\eb_{3})=a(\eb_{1}-\eb_{3}),
\end{equation}
where $a,b,c,d\in\br$. One can check that the defined mapping is a
derivation. This completes the proof.
\end{proof}

\begin{lemma}\label{dd3c}  Let $A$ be a three dimensional genetic Volterra algebra such that
$p_{12,1}=\frac{1}{2}.$ Then the following statements hold:
\begin{itemize}
\item[(A)] If $p_{13,1}=p_{23,2}\neq\frac{1}{2},$ then each derivation
of $A$ is of the form
$$D(\eb_{1})=a(\eb_{1}-\eb_{2}),  \  \ D(\eb_{2})=b(\eb_{1}-\eb_{2}),\ \  D(\eb_{3})=0.$$

\item[(B)] If $p_{13,1}=p_{23,2}=\frac{1}{2},$ then each derivation $D$
of $A$ is defined by
\begin{equation}\label{der}
D(\eb_i)=\sum_{j=1}^{3}d_{ij}\eb_{j} \end{equation} where
$\sum\limits_{j=1}^{3}d_{ij}=0$ for $i=1,2,3.$
\end{itemize}
\end{lemma}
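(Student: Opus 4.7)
The plan is to split along the two subcases (A) and (B) and recycle the arguments developed in Case (a) and Case (c) of the proof of Theorem \ref{dd3}, this time extracting a full characterization of the derivation space rather than merely existence of a non-trivial derivation.

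For part (A), the hypotheses $p_{12,1}=\tfrac12$ and $p_{13,1}=p_{23,2}\neq\tfrac12$ place us in the setting where $|I_{1}|=|I_{2}|=1$ and $|I_{3}|=0$. I would first invoke Lemma \ref{dd11} to force $d_{13}=d_{31}=d_{23}=d_{32}=0$, then apply the identity $D(\eb_i)=2\eb_i\circ D(\eb_i)$ at $i=3$ to conclude $d_{33}=0$, so $D(\eb_3)=0$. The same identity at $i=1$ and $i=2$ yields $d_{11}+d_{12}=0$ and $d_{21}+d_{22}=0$, giving exactly $D(\eb_1)=a(\eb_1-\eb_2)$ and $D(\eb_2)=b(\eb_1-\eb_2)$ with $a:=d_{11}$ and $b:=d_{21}$. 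Sufficiency of this form was already verified in the ``only if'' portion of Theorem \ref{dd3}, where the equality $p_{13,3}=p_{23,3}$ (equivalent to our hypothesis $p_{13,1}=p_{23,2}$) is precisely what makes the Leibniz rule hold on $\eb_1\circ\eb_3$ and $\eb_2\circ\eb_3$.

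For part (B), all products of distinct basis vectors collapse to $\eb_i\circ\eb_j=\tfrac12(\eb_i+\eb_j)$. Writing $D(\eb_i)=\sum_{j}d_{ij}\eb_j$ and expanding
\[
2\eb_i\circ D(\eb_i)=2d_{ii}\eb_i+\sum_{j\neq i}d_{ij}(\eb_i+\eb_j)=\Bigl(2d_{ii}+\sum_{j\neq i}d_{ij}\Bigr)\eb_i+\sum_{j\neq i}d_{ij}\eb_j,
\]
and matching against $D(\eb_i)$ on the $\eb_i$-coordinate gives $d_{ii}+\sum_{j\neq i}d_{ij}=0$, i.e.\ the row-sum-zero condition for $i=1,2,3$. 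The converse direction requires verifying the Leibniz identity $D(\eb_i\circ\eb_j)=D(\eb_i)\circ\eb_j+\eb_i\circ D(\eb_j)$ for each of the three pairs $(i,j)$; a routine expansion of both sides collects the coefficients into expressions that vanish precisely because of the constraints $\sum_{\ell}d_{i\ell}=\sum_{\ell}d_{j\ell}=0$.

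The main obstacle I anticipate is the bookkeeping in the sufficiency direction of (B)---checking the Leibniz rule across all three pairs and tracking that the three independent row-sum-zero conditions really do handle every required cancellation, including cross-terms of the form $d_{ij}\eb_j\circ\eb_k$ that arise when $i,j,k$ are all distinct. The computation is linear but tedious, and careful coordinate matching is needed to confirm that no further constraints on the $d_{ij}$ appear.
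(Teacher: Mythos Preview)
Your proposal is correct and follows essentially the same route as the paper: in (A) you use Lemma~\ref{dd11} and the idempotent identity $D(\eb_i)=2\eb_i\circ D(\eb_i)$ exactly as the paper does, and in (B) both directions match the paper's argument. For the sufficiency in (B) that you flag as tedious, the paper handles all pairs in one stroke by writing $D(\eb_i\circ\eb_k)=\tfrac12\sum_j(d_{ij}+d_{kj})\eb_j$, then adding the zero term $\tfrac12\big(\sum_j d_{ij}\big)\eb_i+\tfrac12\big(\sum_j d_{kj}\big)\eb_k$ and regrouping via $\eb_i\circ\eb_j=\tfrac12(\eb_i+\eb_j)$ to obtain $\eb_i\circ D(\eb_k)+\eb_k\circ D(\eb_i)$ directly---so no pair-by-pair bookkeeping is needed.
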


\begin{proof}
(A). Let $D(\eb_i)=\sum_{j=1}^{3}d_{ij}\eb_{j}$ ($i=1,2,3$) be a
derivation. From the condition (A) we infer that
$d_{13}=d_{31}=d_{23}=d_{32}=d_{22}=0$. The equality
$$d_{33}\eb_{3}=D(\eb_{3})=D(\eb_{3}\circ \eb_{3})=2\eb_{3}\circ D(\eb_{3})=2\eb_{3}\circ d_{33}\eb_{3},$$
implies $d_{33}=0$. Using the relations $D(\eb_1)=2\eb_1\circ
D(\eb_1)$ and $D(\eb_2)=2\eb_2\circ D(\eb_2)$ we get that
$d_{11}=-d_{12}=:a$ and $d_{21}=-d_{22}=:b$. One can check that the
defined mapping is indeed a derivation.

(B) Let $D(\eb_i)=\sum_{j=1}^{3}d_{ij}\eb_{j}$($i=1,2,3$) be a
derivation. Then using the given condition, we have
\begin{eqnarray*}
\sum_{j=1}^{3}d_{ij}\eb_{j}&=&d(\eb_{i})=D(\eb_{i}\circ
\eb_{i})=2\eb_{i}\circ
D(\eb_{i})\\
&=&2\eb_{i}\circ\bigg(\sum_{j=1}^{3}d_{ij}\eb_{j}\bigg)=\sum_{j=1}^{3}d_{ij}\eb_{i}+\sum_{j=1}^{3}d_{ij}\eb_{j}.
\end{eqnarray*}
which yields  $\sum_{j=1}^{3}d_{ij}=0$ for $i=1,2,3.$

Now let us establish that the reverse. Namely, we show that the
mapping given by \eqref{der} with $\sum_{j=1}^{3}d_{ij}=0$ for
$i=1,2,3$ is a derivation. Indeed, we have
\begin{eqnarray*}
D(\eb_{i}\circ
\eb_{k})&=&D\bigg(\frac{1}{2}(\eb_{i}+\eb_{k})\bigg)=\frac{1}{2}\bigg(\sum_{j=1}^{3}d_{ij}\eb_{j}+\sum_{j=1}^{3}d_{kj}\eb_{j}\bigg)\\
&=&\frac{1}{2}\bigg(\sum_{j=1}^{3}d_{ij}\eb_{j}+\sum_{j=1}^{3}d_{kj}\eb_{j}\bigg)+\frac{1}{2}\bigg(\sum_{j=1}^{3}d_{ij}\eb_{i}+\sum_{j=1}^{3}d_{kj}\eb_{k}\bigg)\\[2mm]
&=&\eb_{i}\circ D(\eb_{k})+\eb_{k}\circ D(\eb_{i}).
\end{eqnarray*}
This completes the proof.

\end{proof}
Now we want to apply Lemma  \ref{dd3c} to describe all local
derivations of the three dimensional genetic Volterra algebra.
Recall that a linear map $\Delta:A\to A$ is called \textit{local
derivation} if for any $x\in A$ there exists a derivation $D_x:A\to
A$ such that $\Delta(x)=D_x(x)$.

\begin{theorem}
Let $A$ be a three dimensional genetic Volterra algebra. Then any
local derivation is a derivation.
\end{theorem}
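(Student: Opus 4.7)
The strategy is to leverage the complete classification of derivations of three-dimensional genetic Volterra algebras provided by Theorem \ref{dd3} and Lemma \ref{dd3c}. Any local derivation $\Delta$ is linear, so it is fully determined by the three values $\Delta(\eb_1),\Delta(\eb_2),\Delta(\eb_3)$. The plan is to show that the locality condition forces each $\Delta(\eb_i)$ to lie in the subspace $\{D(\eb_i): D \text{ is a derivation of } A\}$, and then to exploit the fact that the explicit form of derivations given in Lemma \ref{dd3c} imposes no coupling between these three values, so that a row-by-row reconstruction from the local data automatically assembles into a genuine derivation.

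First I would dispose of the case in which $A$ admits only the trivial derivation. By Theorem \ref{dd3} this corresponds to the failure of the conditions $p_{ij,i}=\frac{1}{2}$, $p_{ik,i}=p_{jk,j}$ for every distinct $i,j,k\in\{1,2,3\}$; in this situation, the only permissible $D_x$ is zero for every $x\in A$, so $\Delta\equiv 0$, which is a (trivial) derivation. Otherwise Theorem \ref{dd3} supplies distinct indices $i,j,k$ with $p_{ij,i}=\frac{1}{2}$ and $p_{ik,i}=p_{jk,j}$. Since any permutation of the basis $\{\eb_1,\eb_2,\eb_3\}$ produces an isomorphic algebra, on which derivations and local derivations transport correspondingly, I may assume without loss of generality that $(i,j,k)=(1,2,3)$, placing us in the setting of Lemma \ref{dd3c}.

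In subcase (A) of Lemma \ref{dd3c}, every derivation $D$ satisfies $D(\eb_1)\in\{a(\eb_1-\eb_2):a\in\br\}$, $D(\eb_2)\in\{b(\eb_1-\eb_2):b\in\br\}$, and $D(\eb_3)=0$. Applying $\Delta(\eb_i)=D_{\eb_i}(\eb_i)$ to each basis vector separately yields scalars $a,b\in\br$ with $\Delta(\eb_1)=a(\eb_1-\eb_2)$, $\Delta(\eb_2)=b(\eb_1-\eb_2)$ and $\Delta(\eb_3)=0$; the converse direction implicit in Lemma \ref{dd3c}(A) confirms that such a $\Delta$ is itself a derivation. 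Subcase (B) runs in parallel: writing $\Delta(\eb_i)=\sum_{j=1}^{3}c_{ij}\eb_{j}$, applying the locality relation at each $\eb_i$ combined with Lemma \ref{dd3c}(B) forces $\sum_{j=1}^{3}c_{ij}=0$ for $i=1,2,3$, and the same lemma certifies that $\Delta$ is then a derivation. The only delicate point is the structural observation that in both subcases the characterization of derivations factors into independent conditions on $D(\eb_1)$, $D(\eb_2)$ and $D(\eb_3)$ with no cross-compatibility required, so that the local data at three different basis vectors can be spliced together without obstruction.
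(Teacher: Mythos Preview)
Your proposal is correct and follows essentially the same approach as the paper's proof: evaluate the local derivation at the three basis vectors, invoke the explicit description of derivations from Lemma~\ref{dd3c} to constrain each $\Delta(\eb_i)$, and then recognize the resulting linear map as a derivation via the same lemma. Your treatment is in fact slightly more careful than the paper's---you explicitly dispose of the trivial-derivation case, justify the permutation reduction to $(i,j,k)=(1,2,3)$, and in subcase~(B) you use the row-sum-zero characterization of Lemma~\ref{dd3c}(B) directly rather than the narrower four-parameter family~\eqref{dd34}, while also articulating the key structural point that the derivation conditions decouple row by row.
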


\begin{proof} Due to Lemma \ref{dd3c} a derivation on $A$ exists if
one of the following cases hold:
\begin{enumerate}
\item[(A)]$p_{12,1}=\frac{1}{2}$, $p_{13,1}=p_{23,2}\neq\frac{1}{2}$;

\item[(B)] $p_{12,1}=p_{13,1}=p_{23,2}=\frac{1}{2}$.
\end{enumerate}

Let us consider the mentioned cases one by one.

Assume that case (A) holds. Then due to Lemma \ref{dd3c} any
derivation of $A$ has a form given by \eqref{dd33}.

Suppose that $\Delta$ is a local derivation of $A$. Then from the
definitions of local derivation, one finds
  \begin{equation}\label{dd333}
\Delta(\eb_{1})= D_{\eb_{1}}(\eb_{1}), \ \ \Delta(\eb_{2})=
D_{\eb_{2}}(\eb_{2}),  \ \ \Delta(\eb_{3})=D_{\eb_{3}}(\eb_{3}).
\end{equation}

By means of \eqref{dd33} one infers that
\begin{equation}\label{dd3331}
D_{\eb_{1}}(\eb_{1})=a_1(\eb_{1}-\eb_{2}), \ \
D_{\eb_{2}}(\eb_{2})=b_2(\eb_{1}-\eb_{2}),  \ \
D_{\eb_{3}}(\eb_{3})=0.
\end{equation}
Therefore, from \eqref{dd333} and \eqref{dd3331} we find
 \begin{equation*}\label{dd3332}
\Delta(\eb_{1})= a_1(\eb_{1}-\eb_{2}), \ \ \Delta(\eb_{2})=
b_2(\eb_{1}-\eb_{2}),  \ \ \Delta(\eb_{3})=0
\end{equation*}
which according to Lemma \ref{dd3c} means that $\Delta$ is a
derivation.

Now assume that case (B) holds. Then for a local derivation $\Delta$
we have \eqref{dd333}. From \eqref{dd34} one finds
  \begin{equation*}\label{dd3333}
\Delta(\eb_{1})= c_1\eb_{1}+d_1\eb_{2}-(c_1+d_1)\eb_{3}, \ \
\Delta(\eb_{2})= b_2(\eb_{1}-\eb_{2}),  \ \
\Delta(\eb_{3})=a_3(\eb_{1}-\eb_{3}).
\end{equation*}
This again by Lemma \ref{dd3c} yields that $\Delta$ is a
derivation.\end{proof}

From this theorem we can formulate the following

\begin{conjecture}
Let $A$ be an $n$-dimensional genetic Volterra algebra. Then a
category of local derivations of $A$ coincides with the category of
derivations of $A$.
\end{conjecture}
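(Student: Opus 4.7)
The plan is to leverage Lemma \ref{dd3c}, which completely describes the derivations of $A$ in the two nontrivial cases, together with the key observation that in each such case the linear system defining a derivation \emph{decouples across basis vectors}: the constraints on $D(\eb_i)$ do not involve $D(\eb_j)$ for $j \neq i$. First I would reduce to the cases covered by the lemma. If no permutation of indices satisfies the hypothesis of Theorem \ref{dd3}, then only the trivial derivation exists on $A$; any local derivation $\Delta$ then satisfies $\Delta(\eb_i) = D_{\eb_i}(\eb_i) = 0$ for every basis vector, hence $\Delta \equiv 0$, which is a derivation. Otherwise, by relabeling indices, we may assume $p_{12,1} = \frac{1}{2}$ and fall into case (A) or (B) of Lemma \ref{dd3c}.

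In case (A), Lemma \ref{dd3c}(A) says that a derivation is determined by a free pair $(a,b) \in \br^2$, acting as $\eb_1 \mapsto a(\eb_1 - \eb_2)$, $\eb_2 \mapsto b(\eb_1 - \eb_2)$, $\eb_3 \mapsto 0$, with no coupling between the two parameters. Applying the local derivation definition at each of $\eb_1, \eb_2, \eb_3$ (with possibly distinct witnessing derivations $D_{\eb_1}, D_{\eb_2}, D_{\eb_3}$) yields
\[
\Delta(\eb_1) = a_1(\eb_1 - \eb_2), \quad \Delta(\eb_2) = b_2(\eb_1 - \eb_2), \quad \Delta(\eb_3) = 0
\]
for some scalars $a_1, b_2 \in \br$. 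Since the pair $(a_1, b_2)$ defines a valid derivation by Lemma \ref{dd3c}(A), and this derivation agrees with $\Delta$ on the basis, linearity forces them to agree on all of $A$, so $\Delta$ is a derivation.

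In case (B), Lemma \ref{dd3c}(B) characterizes derivations as exactly the linear maps $D$ with $D(\eb_i) = \sum_{j=1}^3 d_{ij} \eb_j$ satisfying the row-wise condition $\sum_{j=1}^3 d_{ij} = 0$ for each $i$. Evaluating $\Delta(\eb_i) = D_{\eb_i}(\eb_i)$ for a witnessing derivation $D_{\eb_i}$ shows that the coefficients of $\Delta(\eb_i)$ sum to zero, for every $i \in \{1,2,3\}$. This is precisely the defining condition for $\Delta$ itself to be a derivation, so we are done.

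The main conceptual obstacle (and really the only one) is recognizing that in both case (A) and case (B), the constraints picking out a derivation are \emph{pointwise in the basis}: rows of the matrix $(d_{ij})$ may be chosen independently. Once this decoupling is identified from Lemma \ref{dd3c}, assembling the values of $\Delta$ at each basis element from different witnessing derivations and checking that the assembled map still lies in the derivation space is automatic. This property is special to the Volterra setting in low dimension and is what prevents the usual gap between local derivations and derivations.
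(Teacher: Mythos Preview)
There is a genuine gap: the statement you are asked to prove is the \emph{conjecture} for an arbitrary $n$-dimensional genetic Volterra algebra, but every step of your argument is specific to $n=3$. Theorem \ref{dd3} and Lemma \ref{dd3c}, on which your entire reduction rests, are stated and proved only for three-dimensional algebras; there is no classification of derivations in the paper for general $n$, and hence no analogue of the row-wise ``decoupling'' you exploit. Your case split into (A) and (B) after relabeling indices presumes there are exactly three basis vectors, and the conditions $p_{12,1}=\frac{1}{2}$, $p_{13,1}=p_{23,2}$ do not exhaust the possibilities once $n\geq 4$. You even concede in your final paragraph that the decoupling is ``special to the Volterra setting in low dimension,'' which is precisely why the argument does not extend.

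What you have actually written is a correct proof of the preceding \emph{theorem} (the three-dimensional case), and it matches the paper's own proof of that theorem essentially line for line: evaluate a local derivation $\Delta$ at each basis vector, read off from Lemma \ref{dd3c} the form of $\Delta(\eb_i)$, and observe that the resulting linear map satisfies the derivation constraints. But the paper does not prove the conjecture; it states it as an open problem immediately after the three-dimensional theorem. So there is no ``paper's own proof'' to compare against, and your proposal does not supply one either.
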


\section*{Acknowledgments}
The authors are grateful to an anonymous referee whose valuable
comments and remarks improved the presentation of this paper.


\begin{thebibliography}{99}

%1
\bibitem{B0} Bernstein S.N. Principe de stationarit\'{e} et g\'{e}n\'{e}ralisation de la loi de
Mendel, \textit{Comptes Rendus Acad. Sci. Paris}, {\bf 177}(1923),
581-584.


\bibitem{B} Bernstein S.N., Solution of a mathematical problem connected with the theory of heredity. \textit{Ann. Sci.
de l'Ukraine}, {\bf 1}(1924), 83-114 (Russian)

\bibitem{BM} Bondy A., Murty U.S.R., \textit{Graph theory},
Springer, Berlin, 2008.

\bibitem{Eth}  Etherington I.M.H., Genetic algebras, \textit{Proc. Roy. Soc.
Edinburgh}, {\bf 59}(1939), 242--258.

\bibitem{G2008} Ganikhodjaev N, Hisamuddin H.H.,  Associativity in inheritance or are there associative populations.
{\it Malays. J. Sci.} {\bf 27}(2) (2008), 131--136.


%6
\bibitem{G} Ganikhodzhaev R.N., Quadratic stochastic operators,
Lyapunov functions and tournaments. \emph{Russian Acad. Sci. Sb.
Math.} \textbf{76} (1993), 489-506.


%\bibitem{GE} Ganikhodzhaev, R. N., Eshmamatova, D. B. Quadratic automorphisms
%of a simplex and the asymptotic behavior of their trajectories.
%{\it Vladikavkaz. Math. Jour.} {\bf 8} (2006), no. 2, 12--28.


\bibitem{11} Ganikhodzhaev R., Mukhamedov F., Rozikov U.,
Quadratic stochastic operators and processes: results and open
problems, \textit{Infin. Dimens. Anal. Quantum Probab. Relat. Top.}
{\bf 14}(2011), 270--335.

\bibitem{Go} Gonshor H. Derivations in genetic algebras,
\textit{Comm. Algebra} {\bf 16}(1988), 1525--1542.


%10
\bibitem{HHJ} Hofbauer J., Hutson V., Jansen W., Coexistence for systems
governed by difference equations of Lotka-Volterra type, \textit{J.
Math. Biol.} \textbf{25} (1987), 553--570.

%12
%\bibitem{HS} Hofbauer J., Sigmund K., The theory of evolution and dynamical
%systems, \textit{Cambridge Univ. Press}, 1988.

\bibitem{H} Holgate P., Genetic algebras satisfying Bernstein's stationarity principle,
\textit{J. London Math. Soc.} {\bf 9}(1975), 613--623.

\bibitem{H1} Holgate P., Selfing in genetic algebras,
\textit{J. Math. Biol.} {\bf 6}(1978), 197--206.

\bibitem{H2} Holgate P., The Interpretation of derivations in genetic algebras,
\textit{Linear Algebra Appl.} {\bf 85}(1987), 75--79.


%14
\bibitem{L} Lotka A.J., Undamped oscillations derived from the law of mass action,
\textit{J. Amer. Chem. Soc.} {\bf 42} (1920), 1595--1599.

%16
\bibitem{Ly2} Lyubich Yu.I., \textit{Mathematical structures in population genetics},
Springer-Verlag, 1992.

%%17
%\bibitem{May} May R.M., Simple mathematical models with very complicated
%dynamics, \textit{Nature} {\bf 261} (1976) 459-467
%
%%18
%\bibitem{MO} May R.M., Oster G.F., Bifurcations and dynamic complexity in
%simple ecological models, \textit{Am. Nat.} \textbf{110} (1976)
%573–-599.

%19

\bibitem{MC} Micali A., Campos, T. M. M., Costa e Silva M. C., Ferreira
S. M. M., and Costa R. C. F., D\'{e}rivations dans les alg\^{e}bres
Gametiques, \textit{Comm. Algebra} {\bf 12} (1984) 239--243.


%\bibitem{Moran} Moran P.A.P., Some remarks on animal population dynamics,
%\textit{Biometrics} \textbf{6} (1950) 250–-258.


\bibitem{MG2015}  Mukhamedov F., Ganikhodjaev N. \textit{Quantum Quadratic Operators and Processes},
Lect. Notes Math. Vol. 2133, Springer, Berlin,  2015.

%\bibitem{11MQ2014} Mukhamedov F., Qaralleh I.  On derivations of genetic
%algebras, \textit{J. Phys.: Conf. Ser.} {\bf 553} (2014), 012004.


%\bibitem{MQR2014} Mukhamedov F., Qaralleh I, Rozali W.N.F.A.W. On
%$\xi^{(a)}$-quadratic stochastic operators on 2D simplex,
%\textit{Sains Malaysiana} {\bf 43}(2014), 1275--1281.

\bibitem{MQ2014} Mukhamedov F., Qaralleh I.,   On derivations of genetic
algebras, \textit{J. Phys.: Conf. Ser.} {\bf 553} (2014), 012004.

\bibitem{MH2015}  Mukhamedov F., Taha M.H.M. On Volterra and
orthoganality preserving quadratic stochastic operators,
\textit{Miskloc Math. Notes} {\bf 17}(2016), 457--470.

\bibitem{NSE} Narendra S.G., Samaresh C.M., Elliott W.M., On the
Volterra and other nonlinear moldes of interacting populations,
\textit{ Rev. Mod. Phys.} {\bf 43} (1971), 231--276.

%\bibitem{Nar} Narziev N.B. On subalgebras of genetic algebras arising on mathematical
%models of population genetics, \textit{Malay. J. Math. Sci.} {\bf
%4}(2010), 171--181.

\bibitem{NGS} Narziev N.B., Ganikhodzhaev R.N., Shahidi F., Agebraic structures of Volterra type population genetics,
In book: \textit{Proc. 2nd Inter. Conf. Math. Appl. in Engin.} IIUM
Press 2012, pp.410--416.

%23
%\bibitem{PL} Plank M., Losert V.,
%Hamiltonian structures for the n-dimensional Lotka-Volterra
%equations, \textit{J. Math. Phys.} {\bf 36} (1995) 3520--3543.

\bibitem{R} Reed M.L., Algebraic structure of genetic inheritance, \textit{Bull.
Amer. Math. Soc.(N.S.)} {\bf 34}(1997), 107--130.

%\bibitem{T} Takeuchi Y., \textit{Global dynamical properties of Lotka--Volterra systems},
%World Scientific, 1996.

%\bibitem{Tian} Tian J. P. \textit{Evolution algebras and their applications},
%Lecture Notes in Mathematics, Vol. 1921, Springer-Verlag, Berlin,
%2008.

%26
%\bibitem{U} Ulam S.M., A collection of mathematical problems.
%{\it Interscience Publ. New York-London}, 1960.
%
%%27
%\bibitem{V} Vallander S.S., On the limit behavior of iteration
%sequence of certain quadratic transformations. \textit{Soviet
%Math. Doklady}, {\bf 13}(1972), 123-126.

%28
\bibitem{V1} Volterra V.,  Lois de fluctuation de la
population de plusieurs esp\`{e}ces coexistant dans le m\^{e}me
milieu,  \textit{Association Franc. Lyon} {\bf 1926} (1927),
96--98 (1926).

%%29
%\bibitem{V2} Volterra V., Lecons sur la theorie mathematique de la lutte pour la vie,
%{\it Gauthiers-Villars, Paris}, 1931.

\bibitem{WB} Worz-Busekros, A., \textit{Algebras in Genetics},
Lect. Notes in Biomathematics, Vol. 36, Springer-Verlag, Berlin,
1980.
%30

%\bibitem{RZ2} Zhamilov U.U., Rozikov U.A.,  The dynamics of strictly non-Volterra quadratic stochastic operators on the 2-simplex ,
% {\it Sbornik: Math.} {\bf 200} (2009), 1339--1352.

\end{thebibliography}
\end{document}